\documentclass{amsart}
\usepackage{subcaption}
\usepackage{amsrefs}
\usepackage{blkarray}
\usepackage{parskip}
\usepackage{xcolor}
\usepackage{stmaryrd}
\usepackage{graphicx}
\usepackage[export]{adjustbox}

\theoremstyle{plain}
\newtheorem{thm}{Theorem}
\newtheorem{prop}[thm]{Proposition}
\newtheorem{lemma}[thm]{Lemma}

\theoremstyle{definition}
\newtheorem{dfn}[thm]{Definition}
\newtheorem{exam}[thm]{Example}

\theoremstyle{remark}

\newtheorem*{rem*}{Remark}
\numberwithin{equation}{section}

\newcommand{\C}{\mathbb C}
\newcommand{\R}{\mathbb R}
\newcommand{\GL}{{\rm GL}}

\DeclareMathOperator{\Sp}{Sp}
\DeclareMathOperator{\M}{M}

\newcommand{\gl}{\mathfrak{gl}}
\newcommand{\g}{\mathfrak g}
\renewcommand{\k}{\mathfrak k}

\newcommand{\LRC}[2]{{\rm LRC}\:^{#1}_{#2}}
\newcommand{\la}{\lambda}
\newcommand{\al}{\alpha}
\newcommand{\be}{\beta}

\renewcommand{\b}{\mathbf b}
\renewcommand{\c}{{\rm c}}
\DeclareMathOperator{\nc}{nc}

\DeclareMathOperator{\ch}{ch}
\DeclareMathOperator{\U}{U}

\DeclareMathOperator{\SU}{SU}
\renewcommand{\S}{{\rm S}}
\DeclareMathOperator{\SL}{SL}
\DeclareMathOperator{\Sym}{Sym}
\renewcommand{\sl}{\mathfrak{sl}}
\newcommand{\s}{\mathfrak s}
\renewcommand{\u}{\mathfrak u}
\newcommand{\su}{\mathfrak{su}}
\newcommand{\D}{\mathbb D}
\renewcommand{\u}{\mathfrak u}
\newcommand{\m}{\mathfrak m}

\begin{document}

\title[Enright resolutions and Blattner's formula]{Enright resolutions encoded by a generating function for Blattner's formula: Type A}

\author{William Q. Erickson}
\address{
William Q. Erickson\\
University of Wisconsin--Milwaukee \\
3200 North Cramer Street\\
Milwaukee, WI 53211}
\email{wqe@uwm.edu}

\begin{abstract}
Consider the classical action of $\GL_n$ on a sum of $q$ copies of the defining representation and $p$ copies of its dual; by Howe duality, the polynomial functions on this space decompose under the joint action of $\GL_n$ and $\gl_{p+q}$.  The modules for $\gl_{p+q}$ are infinite-dimensional and their structure is complicated outside a certain stable range, although Enright and Willenbring (2005) constructed resolutions in terms of generalized Verma modules.  We show that these resolutions can be read off from the coefficients in a formal series arising in an entirely different setting --- discrete series representations of $\SU(n,p+q)$ in the case of two noncompact simple roots.
\end{abstract}

\subjclass[2010]{Primary 20G20; Secondary 17B10; 05E10}

\keywords{Classical invariant theory, Howe duality, generalized Verma modules, Enright resolutions, discrete series, Blattner's formula}

\maketitle


\section{Introduction}
\label{s:intro}

The result in this paper is a formal series which can be interpreted from two seemingly unrelated perspectives: the classical invariant theory of $\GL_n$, and $\k$-type multiplicities in discrete series representations of $\SU(n,m)$.  

Our first setting is the classical action of the complex general linear group $\GL_n$ on $V=\M_{n,p} \oplus \M_{n,q}$.  As a result of Howe duality, the polynomial functions on $V$ decompose under the joint action of $\GL_n \times \gl_{p+q}$:
$$
\C[V] \cong \bigoplus_\la F^\la_n \otimes \widetilde F^\la_{p,q},
$$
where the $F^\la_n$ are rational $\GL_n$-representations, and the $\widetilde F^\la_{p,q}$ are infinite-dimensional $\gl_{p+q}$-modules.  Although the modules $\widetilde F^\la_{p,q}$ have a complicated structure (outside the stable range $n \geq p+q$), they nonetheless have finite resolutions (constructed by Enright and Willenbring in \cite{ew}) in terms of generalized Verma modules.  It is these Enright resolutions which we seek to organize in one place.  

Our second setting is, it seems, far removed from the first: namely, the special indefinite unitary group $\SU(n,p+q)$ and certain of its discrete series representations in the case of two noncompact simple roots. Harish--Chandra first conjectured (1954), and subsequently proved in \cite{hc}, that a linear connected semisimple real Lie group $G_0$ has discrete series representations if and only if its rank is equal to that of a maximal compact subgroup $K_0$.  It is then natural to restrict a discrete series representation of $G_0$ to the action of the complexified Lie algebra $\k$, and to seek the multiplicities of irreducible $\k$-modules (``$\k$-types'') in the resulting decomposition.  A formula for these $\k$-type multiplicities --- an alternating sum over the Weyl group of $\k$ --- was (according to Harish--Chandra) conjectured by Robert J.\ Blattner, and this ``Blattner's formula'' was eventually proved by Hecht and Schmid in \cite{hs}.   (We note that outside certain conditions, the outputs of Blattner's formula do not correspond to actual $\k$-type multiplicities, and can assume negative values.)  More recently, in \cite{wz}, Willenbring and Zuckerman derived an elegant generating function for a formal series $\b(0)$, which encodes the values of Blattner's formula for the trivial $\k$-type.   This series $\b(0)$ is our focus in the present paper.

Theoretically, the heart of the paper lies in Lemmas \ref{lemma:deltabarcancel} and \ref{lemma:V_iso_u'+}, since these spell out the explicit connection between the character theory of $\C[V]$, on one hand, and the series $\b(0)$ for $\SU(n,p+q)$ on the other hand.  Perhaps more interesting than the main result itself (Theorem \ref{thm:mainresult}), however, is its application in unraveling the Enright resolutions.  Specifically, we show (see Example \ref{ex:n1p3q3}) how to write down the Enright resolution of $\widetilde F^\la_{p,q}$ --- without any knowledge of the machinery used in \cite{ew} to construct the resolutions --- simply by reading off the  coefficient of $e^{\la}$ in $\b(0)$.

\section{Classical invariant theory}
\label{s:classicalinvariant}

We begin by recalling a result of classical invariant theory and Howe duality in Type A.  We largely follow the exposition in Section 3 of \cite{htw}.  Fix $n,p,q \in \mathbb N$, and let
$$
V = \M_{n,p}\oplus \M_{n,q}
$$
where the matrices are over $\C$.  Let $\C[V]$ denote the space of polynomial functions on $V$.  Now consider the pair $(\U(n), \U(p,q))$.  This is known as a \textit{compact dual pair}, since the first element is a compact group, and since the two groups centralize each other as subgroups of $\Sp(2n(p+q),\R)$.  In order to study the action of the dual pair $(\U(n), \U(p,q))$ on $\C[V]$, it will be convenient to consider instead the complexification of the first group, namely $\GL_n=\GL(n,\C)$, and the complexified Lie algebra of the second group, namely $\gl_{p+q} = \gl(p+q,\C)$.  This will allow us to avoid technicalities involving a covering group of $\U(p,q)$.  Hence, we consider the pair $(\GL_n,\gl_{p+q})$ and its action on $\C[V]$.

We let $\GL_n$ act by matrix multiplication on the second summand $\M_{n,q}$ of $V$, and by the inverse transpose (denoted by the superscript $-T$) on the first summand $\M_{n,p}$:
\begin{equation}
\label{GLnaction}
g \cdot (X,Y) = (g^{-T}X, gY).
\end{equation}
In classical invariant theory, this is the natural action obtained by regarding $\M_{n,q}$ as the sum of $q$ copies of the defining representation $\C^n$, and $\M_{n,p}$ as the sum of $p$ copies of the dual representation $(\C^n)^*$.  The action \eqref{GLnaction} induces the usual action on $\C[V]$ via
\begin{equation*}
g \cdot f(X,Y) = f(g^T X, g^{-1}Y).
\end{equation*}

This classical action of $\GL_n$ on $\C[V]$ extends to the Weyl algebra $\D(V)$ of polynomial-coefficient differential operators on $\C[V]$; in particular, $\GL_n$ normalizes $\D(V)$ inside ${\rm End}(\C[V])$.  Let $\D(V)^{\GL_n}$ be the $\GL_n$-invariant subalgebra, i.e., the subalgebra containing those operators that commute with the $\GL_n$-action on $\C[V]$.  Then $\D(V)$ is generated (as an algebra) by a subset which is isomorphic to $\gl_{p+q}$ as a Lie algebra.  (See Section 2 of \cite{cew} for the explicit isomorphism.)  This fact implies the following Howe duality decomposition as a module for $\GL_n \times \gl_{p+q}$:
\begin{equation}
\label{howedecomp}
\C[V] \cong \bigoplus_{\la} F_n^\la \otimes \widetilde F^\la_{p,q},
\end{equation}
where
\begin{itemize}
\item $\la$ ranges over a certain subset of the highest weights indexing the rational representations of $\GL_n$;
\item $F^\la_n$ is the rational representation of $\GL_n$ with highest weight $\la$;
\item $\widetilde F^\la_{p,q}$ is an infinite-dimensional $\gl_{p+q}$-module uniquely determined by $\la$.
\end{itemize}
For the purposes of this paper, we will actually restrict $\U(p,q)$ to the subgroup $G_0 = \SU(p,q)$ because the latter is a Hermitian symmetric group.  Therefore, we will henceforth write $\g_0 = \su(p,q)$ and $\g=(\g_0)^\C = \sl_{p+q} \subset \gl_{p+q}$.

Consider the subalgebra $\k_0 = \s(\u(p)\oplus \u(q)) \subset \g_0$, embedded block-diagonally.  This $\k_0$ is the Lie algebra of the maximal compact subgroup $K_0 = \S(\U(p) \times \U(q)) \subset G_0$, for which reason we say that $\k_0$ is a ``maximal compact subalgebra'' of $\g_0$. (The symbols ``$\S$'' and ``$\s$'' denote the determinant-1 and trace-0 conditions, respectively.)  The complexification $\k = (\k_0)^\C = \s(\gl_p \oplus \gl_q) \subset \g$ acts on $V$ by the derived action (up to a central character) of the complexified group $K = \S(\GL_p \times \GL_q)$:
\begin{equation}
    \label{Kaction}
    (g,h) \cdot (X,Y) = (Xg^{-1},Yh^{-1}),
\end{equation}
where $g \in \GL_p$ and $h \in \GL_q$. Since the action in \eqref{Kaction} commutes with the $\GL_n$-action in \eqref{GLnaction}, we have an action of the product $M'=\S(\GL_n \times \GL_p \times \GL_q)$ upon $V$:
\begin{equation}
    \label{Maction}
    (g_n,g_p,g_q) \cdot (X,Y) = (g_n^{-T}Xg_p^{-1},g_nYg_q^{-1}).
\end{equation}
(We choose the notation $M'$ because this group will soon play the role of a Levi factor in a different setting --- a setting in which we will decorate all groups and Lie algebras with prime symbols in order to distinguish them from the present context.)

We now clarify which $\la$'s appear in the decomposition \eqref{howedecomp}.  Recall that rational representations $F^\la_n$ of $\GL_n$ are indexed by their highest weights, which (in standard coordinates) range over all weakly decreasing integer $n$-tuples $\la=(\la_1,\ldots,\la_n)$.  If the $\la_i$ are all nonnegative, then $\la$ is called a \textbf{partition}, and $F_n^\la$ is a polynomial representation of $\GL_n$.  (``Rational'' and ``polynomial'' refer to the matrix entry functions in the image of the representation, in terms of the coordinate functions on $\GL_n$.)  Note that any weakly decreasing $n$-tuple $\la$ can be written as a pair $[\la^+,\la^-]$ of partitions $\la^+$ and $\la^-$, of lengths $a$ and $b$ respectively (where $a+b\leq n$), by setting
$$
\la = [\la^+,\la^-] :=  (\la^+_1,\ldots,\la^+_a,0,\ldots,0,-\la^-_b,\ldots,-\la^-_1).
$$
For example, if $\la=(5,2,0,0,-1,-3,-4)$, then $\la^+=(5,2)$ and $\la^- = (4,3,1)$.  We will consider two partitions equivalent if they differ only by padding zeros at the end; thus $(5,2)$ is the same as $(5,2,0,0)$.  Let $\ell(-)$ denote the length of a partition, disregarding trailing zeros.  

The sum in \eqref{howedecomp} ranges over all weakly decreasing $n$-tuples $\la$ such that $\ell(\la^+)\leq p$ and $\ell(\la^-)\leq q$.  These length conditions motivate a notion of ``stability'': we say that the parameters $n,p,q$ lie in the \textbf{stable range} when $$n \geq p+q.$$  Inside the stable range, any pair of partitions $(\la^+,\la^-)$ such that $\ell(\la^+)\leq p$ and $\ell(\la^-)\leq q$ determines a weakly decreasing $n$-tuple $\la = [\la^+,\la^-]$, i.e., the highest weight of the rational $\GL_n$-representation $F^\la_n$.  Furthermore, inside the stable range, upon restriction to $K = \S(\GL_p \times \GL_q)$, the module $\widetilde F^\la_{p,q}$ has the especially nice structure
\begin{equation}
\label{kdecomp}
     \widetilde F^\la_{p,q} \cong \C[\M_{p,q}] \otimes (F^{\la^+}_p \otimes F^{\lambda^-}_q),
\end{equation}
where $K$ acts on the second tensor factor in the obvious way, and on the first tensor factor via
\begin{equation}
\label{kactiononMpq}
(g,h)\cdot f(X) = f(g^T X h).
\end{equation}  The reader may recognize \eqref{kdecomp} as the generalized Verma module for $\g$ induced from the irreducible $\k$-module $F^{\la^+}_p \otimes F^{\lambda^-}_q$.  In this context, $\C[\M_{p,q}] \cong \Sym(\mathfrak p^-)$ as $\k$-modules, given the Cartan decomposition $\g = \mathfrak p^- \oplus \k \oplus \mathfrak p^+$.  (We will see in the following section that inside the stable range, $\widetilde F^\la_{p,q}$ is a discrete series representation of $G_0$ as well.)

\subsection*{Outside the stable range: Enright resolutions}

Outside the stable range, the structure of $\widetilde F^\la_{p,q}$ is more complicated than in \eqref{kdecomp}.  Regardless of stability, however, $\widetilde F^\la_{p,q}$ admits a certain finite resolution, as proved by Enright and Willenbring in \cite{ew}.  This idea is an extension of the BGG resolution of a finite-dimensional module in terms of Verma modules, and of the Lepowsky resolution of a finite-dimensional module in terms of generalized Verma modules.   

In particular, there exists an \textbf{Enright resolution} (also called a ``generalized BGG resolution'' in \cite{eh}) for any unitarizable highest-weight representation of a simple connected real Lie group ($G_0 = \SU(p,q)$ in our case), given a maximal compactly embedded subgroup ($K_0 = \S(\U(p)\times \U(q))$ for us) such that $(G_0,K_0)$ is a Hermitian symmetric pair.  (As we will see later, the block-diagonal embedding $K_0 \subset G_0$ implies the Hermitian symmetric condition.)  Each of the finitely many terms in the Enright resolution is the direct sum of generalized Verma modules
\begin{equation}
\label{gvm}
U(\g) \otimes_{U(\mathfrak q)} L_\k(\xi)
\end{equation}
associated with the standard maximal parabolic subalgebra $\mathfrak q = \k \oplus \mathfrak p^+$, where $L_\k(\xi)$ denotes the irreducible $\k$-module with highest weight $\xi$, regarded as a $\mathfrak q$-module by letting $\mathfrak p^+$ act trivially.  As a $\k$-module, \eqref{gvm} is isomorphic to $\Sym(\mathfrak p^-) \otimes L_\k(\xi)$.  Therefore in our case, namely the Enright resolution of $\widetilde F^\la_{p,q}$, the generalized Verma modules restrict as $\k$-modules to the form 
\begin{equation}
    \label{gvmform}
M_{\mu,\nu}:=\C[\M_{p,q}] \otimes (F^{\mu}_p \otimes F^\nu_q),
\end{equation} where $\mu$ and $\nu$ are partitions of lengths at most $p$ and $q$ respectively.  See Theorem 2 and the preceding discussion in \cite{ew} for the explicit construction of the individual terms in the resolution; for our purposes, we will appeal only to its existence, because our main result will actually show us how to find all the data from the Enright resolution hidden within a certain generating function.  (See Example \ref{ex:n1p3q3}.) 

It will be useful to assign to each generalized Verma module $M_{\mu,\nu}$ an integer $\varepsilon^\la_{\mu,\nu}$ which describes its occurrence in the Enright resolution of $\widetilde F^\la_{p,q}$; morally, we want
$$
\varepsilon^\la_{\mu,\nu} = \begin{cases}
+1, & \text{$M_{\mu,\nu}$ is a summand in an even term in the resolution of $\widetilde F^\la_{p,q}$},\\
-1, & \text{$M_{\mu,\nu}$ is a summand in an odd term in the resolution of $\widetilde F^\la_{p,q}$},\\
0, & M_{\mu,\nu} \text{ does not appear anywhere in the resolution of }\widetilde F^\la_{p,q}.
\end{cases}
$$
(In theory we might also have values besides $\pm 1$ and $0$, if several copies of $M_{\mu,\nu}$ appear in a resolution.)  To formalize this notion, let $\ch_\k(-)$ denote the $\k$-character of a $\k$-module.  Let $$
\mathcal X:=\{ \ch_\k \left( M_{\mu,\nu} \right) \mid \ell(\mu)\leq p \text{ and }\ell(\nu)\leq q\}
$$
and consider its integer span $\mathbb Z\mathcal X$. (We can think of this as the Grothendieck group of the generalized Verma modules.) Then we can naturally regard the Euler characteristic of an Enright resolution as an element of $\mathbb Z\mathcal X$. In fact $\mathcal X$ is a basis for $\mathbb Z\mathcal X$, and so $\ch_\k ( \widetilde F^\la_{p,q})$ has a unique expansion as an element of $\mathbb Z\mathcal X$.  

\begin{dfn}
\label{def:epsilon}
For fixed $n,p,q$, suppose $\la,\mu,\nu$ satisfy the following conditions:
\begin{equation}
    \label{howeconditions}
    \la \text{ a weakly decreasing $n$-tuple}; \quad \ell(\la^+), \ell(\mu) \leq p; \quad \ell(\la^-),\ell(\nu)\leq q.
\end{equation}
Then we define $$\varepsilon^\la_{\mu,\nu}:= \text{the coefficient of $\ch_\k \left(M_{\mu,\nu}\right)$ in }\ch_\k(\widetilde F^\la_{p,q}),$$
where $\ch_\k(\widetilde F^\la_{p,q})$ is expanded as an element of $\mathbb Z \mathcal X$.
\end{dfn}


Whether or not we are in the stable range, we thus have a $\k$-character of the form
\begin{equation}
\label{ftildechar}
    \ch_\k (\widetilde F^\la_{p,q}) = \ch_\k \left(\C[\M_{p,q}]\right) \cdot \sum_{\mu,\nu} \varepsilon^\la_{\mu,\nu} \cdot \ch_\k( F^{\mu}_p \otimes F^\nu_q),
\end{equation}
where $\varepsilon^\la_{\mu,\nu} \in \mathbb Z$ is nonzero for only finitely many $\mu,\nu$.  Note that inside the stable range, \eqref{kdecomp} implies that \begin{equation} 
\label{epsilonstable}
    \varepsilon^\la_{\mu,\nu}=\delta_{(\la^+,\la^-),\:(\mu,\nu)}
\end{equation}
where $\delta$ is the Kronecker delta.

We can now write out the character of $\C[V]$ as a representation of $M'$, under the action in \eqref{Maction}.  Combining \eqref{howedecomp} and \eqref{ftildechar}, we have the factorization
\begin{equation}
\label{bigcharacter}
    \ch \C[V] = \ch \C[\M_{p,q}] \cdot \sum_{\la,\mu,\nu} \varepsilon^\la_{\mu,\nu} \cdot \ch (F_n^\la \otimes F^{\mu}_p  \otimes F^\nu_q)
\end{equation}
as a character of $M'$ and therefore of $\m'$.  This time the sum is infinite, since $\la$ ranges over an infinite set.

\section{Discrete series and Blattner's formula}
\label{s:Blattner}

In this section, we temporarily forget the concrete setting of the previous section, and so we regard $G_0$, $K_0$, $\g$, $\k$, etc.\ in the abstract.  Nonetheless, the reader may profitably keep in mind the specific notation of Section \ref{s:classicalinvariant}, since we will see that it furnishes an example of the present perspective.

A \textbf{discrete series representation} of a Lie group $G_0$ is a topologically closed subspace of $L^2(G_0)$ that is invariant and irreducible under the usual $G_0$-action on functions.  Harish--Chandra proved in \cite{hc} that a connected semisimple group $G_0$ has discrete series representations if and only if $\text{rank }G_0 = \text{rank }K_0$, where $K_0 \subset G_0$ is a maximal compact subgroup.  (Hence $G_0$ must have finite center.)

It is natural to restrict a discrete series representation from $G_0$ to $K_0$, thus to the complexified Lie algebra $\k$, then decompose into a direct sum of irreducible finite-dimensional $\k$-representations (``$\k$-types''), and then seek the multiplicity of a given $\k$-type in this decomposition.  Before recording this multiplicity formula (named after Robert J. Blattner), we explain the preliminary notation, following that used by Willenbring and Zuckerman in \cite{wz}.

Let $G_0$ be a connected, semisimple Lie group with finite center, with complexified Lie algebra $\g$.  Let $K_0 \subset G_0$ be a maximal compact subgroup, with complexified Lie algebra $\k$.  Fix a Cartan subalgebra $\mathfrak h \subset \g$.  By Harish--Chandra's equal-rank criterion, we may assume that $\mathfrak h \subset \k$.  Let $\Phi = \Phi(\g,\mathfrak h)$ denote the corresponding root system.

Let $\Phi^+$ denote a choice of positive roots and let let $\Phi^- = -\Phi^+$ denote the set of negative roots.  We call a root $\al \in \Phi$ a ``compact root'' if its corresponding root space $\g_\al \subset \k$; otherwise, we call the root ``noncompact.''  We denote the sets of compact and noncompact roots by $\Phi_{\c}$ and $\Phi_{\nc}$ respectively.  Thus we have $\k = \mathfrak h \oplus \bigoplus_{\al \in \Phi_{\c}} \g_\al$.  We write $\u = \bigoplus_{\al \in \Phi_{\nc}} \g_{\al}$ for the sum of the noncompact root spaces, and so we have $\g = \k \oplus \u$.  Finally, since we will need to distinguish the positive and negative roots, we let $$\Phi^+_{\c} = \Phi^+ \cap \Phi_{\c}, \quad \Phi^+_{\nc} = \Phi^+ \cap \Phi_{\nc}, \quad \Phi^-_{\c} = \Phi^- \cap \Phi_{\c},\quad \Phi^-_{\nc} = \Phi^- \cap \Phi_{\nc}$$ in the obvious way.  Then we have $\u = \u^+ \oplus \u^-$, where
$$
\u^+ = \bigoplus_{\al \in \Phi^+_{\nc}}\g_\al \qquad \text{and} \qquad \u^- = \bigoplus_{\al \in \Phi^-_{\nc}}\g_\al.
$$

Let $W_\g$ and $W_\k$ be the Weyl groups for $\g$ and $\k$ respectively, and let $\ell(w)$ denote the length of a Weyl group element $w \in W_\k$, so that $\ell(w) = |w(\Phi^+_{\c}) \cap \Phi^-_{\c}|$.  Let $\Pi = \{\al_1,\ldots,\al_r\} \subset \Phi^+$ be the set of simple roots, and set $\Pi_{\c} = \Pi \cap \Phi_{\c}$ and $\Pi_{\nc} = \Pi \cap \Phi_{\nc}$.  Then either of the sets $\Pi_{\c}$ or $\Pi_{\nc}$ determines the decomposition $\g=\k \oplus \u$.  The case where $|\Pi_{\nc}|=1$ is especially important in the theory of maximal parabolic subalgebras, and in this case we say that $(G_0,K_0)$ is a \textbf{Hermitian symmetric pair}.  (See \cite{ehp} for a detailed exposition of this theory.)

Let $(-,-)$ denote the Killing form on $\g$, which restricts to a nondegenerate form on $\mathfrak h$ and thus allows us to identify $\mathfrak h$ with $\mathfrak h^*$.  Under this identification, $\al^\vee := \frac{2 \al}{(\al,\al)}$ is identified with the simple coroot in $\mathfrak h$ corresponding to $\al$, for each $\al \in \Pi$.    A weight $\xi \in \mathfrak h^*$ is called an integral weight for $\g$ if $(\xi,\al^\vee) \in \mathbb Z$ for all $\al \in \Pi$; the set of $\g$-integral weights is denoted by $P(\g)$.  The same condition defines the $\k$-integral weights $P(\k)$ if we replace $\Pi$ by $\Pi_{\c}$.  Moreover, $\xi$ is said to be $\g$-dominant if $(\xi,\al) \geq 0$ for all $\al \in \Pi$, and likewise $\k$-dominant if $(\xi,\al) \geq 0$ for all $\al \in \Pi_{\c}$; the sets of dominant integral weights are denoted by $P_+(\g)$ and $P_+(\k)$.  We say that $\xi$ is $\g$-regular if $(\xi,\al) \neq 0$ for all $\al \in \Phi$.
For $\delta \in P_+(\k)$ we let $L_{\k}(\delta)$ denote the finite-dimensional representation of $\k$ with highest weight $\delta$. We distinguish the elements $$\rho_{\c}=\frac{1}{2}\sum_{\al \in \Phi^+_{\c}}\al \qquad \text{and} \qquad \rho_{\nc}=\frac{1}{2}\sum_{\al \in \Phi^+_{\nc}}\al$$ to be half the sum of the positive compact (respectively, noncompact) roots. Finally, define $Q(\xi)$ to be the number of ways of writing $\xi$ as a sum of \textit{non}compact positive roots; in other words, $Q$ is the $\Phi^+_{\nc}$-partition function.

For $\delta,\eta \in P(\k)$, \textbf{Blattner's formula} is
$$
B(\delta,\eta) = \sum_{w \in W_\k} (-1)^{\ell(w)} Q\big(w(\delta + \rho_\c)-\rho_c - \eta\big).
$$
Under certain assumptions on $\delta$ and $\eta$, the output of Blattner's formula gives a $\k$-type multiplicity in a discrete series representation of $G_0$.  In order to index these representations, we appeal to Vogan's theory (see \cite{vogan}) of the \textbf{lowest $\k$-type}, i.e., the unique $\k$-type which appears with multiplicity 1 in the $\k$-decomposition of a discrete series representation.  For $\eta \in P_+(\k)$, we write $\mathcal D^\eta$ for the discrete series representation of $G_0$ whose lowest $\k$-type is $L_\k(\eta)$.  This $\eta$ is sometimes called the ``Blattner parameter'' of the representation $\mathcal D^\eta$.  (See \cite{knapp}, page 310.)  We now state the precise interpretation of Blattner's formula in terms of $\k$-type multiplicities, as proved in \cite{hs}.

\begin{thm}[Hecht and Schmid, 1975]
\label{thm:blattner}
Assume $\delta,\eta \in P_+(\k)$ such that $\eta + \rho_\c - \rho_{\nc}$ is $\g$-dominant regular.  Then $B(\delta,\eta)$ equals the multiplicity of $L_\k(\delta)$ in $\mathcal D^\eta$.
\end{thm}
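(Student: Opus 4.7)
The plan is to take Harish--Chandra's character formula for $\mathcal D^\eta$ as the principal input (itself a deep theorem) and deduce Blattner's formula by extracting $\k$-type multiplicities from a formal character identity.

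First I would recall that, restricted to (the regular set of) a compact Cartan subgroup of $G_0$, the global character $\Theta_\eta$ of $\mathcal D^\eta$ is, up to sign,
$$
\Theta_\eta = \frac{\sum_{w \in W_\k} (-1)^{\ell(w)} e^{w\Lambda}}{\prod_{\al \in \Phi^+}(e^{\al/2} - e^{-\al/2})},
$$
where $\Lambda$ is the Harish--Chandra parameter attached to the Blattner parameter $\eta$; the hypothesis that $\eta + \rho_\c - \rho_{\nc}$ is $\g$-dominant regular is precisely what guarantees that $\Lambda$ is $\g$-regular, so this formula is well-posed.

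Next, I would factor the Weyl denominator as $D = D_\c \cdot D_{\nc}$ into its compact and noncompact parts and expand
$$
\frac{1}{D_{\nc}} = e^{-\rho_{\nc}} \prod_{\al \in \Phi^+_{\nc}}(1 - e^{-\al})^{-1} = e^{-\rho_{\nc}} \sum_{\xi} Q(\xi)\, e^{-\xi},
$$
recognizing $Q$ as the noncompact partition function. Multiplying $\Theta_\eta$ by $D_\c$ produces a $W_\k$-alternating formal series. On the other hand, writing the restriction $\Theta_\eta|_{K_0} = \sum_\delta m_\delta \chi_\delta$ with $\chi_\delta$ the Weyl character of $L_\k(\delta)$, one also has $\Theta_\eta \cdot D_\c = \sum_\delta m_\delta \sum_{u \in W_\k} (-1)^{\ell(u)} e^{u(\delta+\rho_\c)}$. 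Equating the two alternating expressions and extracting the coefficient of $e^{\delta+\rho_\c}$ — valid because $\delta+\rho_\c$ is strictly $\k$-dominant, hence a regular weight for $W_\k$ — identifies $m_\delta$ with a $W_\k$-alternating sum of values of $Q$ on translates of $\delta+\rho_\c$, which after a change of summation variable is precisely $B(\delta,\eta)$.

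The main obstacle is not the bookkeeping above: it is the character formula of the first step, whose proof requires the full Harish--Chandra machinery of invariant eigendistributions on $G_0$ with the prescribed asymptotic behavior on the compact Cartan, together with the subtle identification of such distributions with genuine unitary representations. A secondary delicate point is that the formal-series manipulations produce honest, nonnegative multiplicities only in the regular regime: the hypothesis on $\eta+\rho_\c-\rho_{\nc}$ prevents cancellations in the Weyl alternator that would otherwise disrupt the identification, consistent with the remark earlier in the excerpt that $B(\delta,\eta)$ can assume negative values outside this range.
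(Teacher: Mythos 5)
The paper does not prove this theorem; it is stated and cited as the main result of Hecht and Schmid \cite{hs}, so there is no in-paper proof to compare against. Your proposal can still be assessed on its own terms.

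What you have written is the classical heuristic that originally led Blattner to conjecture the formula, and the formal algebra is correct as far as it goes: restrict Harish--Chandra's character formula to a compact Cartan, factor the Weyl denominator as $D = D_\c D_{\nc}$, expand $D_{\nc}^{-1}$ via the noncompact partition function $Q$, multiply through by $D_\c$, and match the coefficient of $e^{\delta+\rho_\c}$ against the $W_\k$-alternating expansion of $D_\c \sum_\delta m_\delta \chi_\delta$. But you have located the genuine difficulty in the wrong place. Harish--Chandra's discrete series character formula, which you call the ``principal input,'' was available roughly a decade before the 1975 Hecht--Schmid proof, yet Blattner's formula remained a conjecture throughout that interval --- precisely because the step you dismiss as ``bookkeeping'' is not. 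The character formula is a pointwise identity of locally integrable functions on the regular set of the compact Cartan, whereas $\sum_\delta m_\delta\,\chi_\delta$ is a formal series that does not converge pointwise there: the $\chi_\delta$ do not decay on the compact torus and the multiplicities $m_\delta$ do not go to zero. The two sides of your ``identity'' therefore live in different categories, and ``extracting the coefficient of $e^{\delta+\rho_\c}$'' is not a licensed operation until one has shown that the formal exponential series produced by the denominator expansion really does encode the true $K$-type decomposition of $\mathcal D^\eta$. Supplying that bridge is the substance of \cite{hs}, and it is entirely absent from your sketch. A smaller point: you interpret the hypothesis that $\eta+\rho_\c-\rho_{\nc}$ be $\g$-dominant regular mainly as a device to keep the output nonnegative, but its primary role is upstream --- it ensures the associated Harish--Chandra parameter is $\g$-regular, so that $\mathcal D^\eta$ is a genuine discrete series rather than a limit of discrete series, and hence that the character formula you begin with is even applicable.
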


\subsection*{A formal series and its generating function}  The primary tool used by Willenbring and Zuckerman in \cite{wz} is the formal power series $$
\b(\delta) := \sum_{\eta \in \mathfrak h^*} B(\delta,\eta)e^\eta
$$
which records the values of Blattner's formula when the first argument $\delta \in P_+(\k)$ is held fixed.  Their main result is the generating function
$$
\b(\delta) = \ch L_\k(\delta) \cdot \frac{\prod_{\alpha \in \Phi^+_\c}1-e^{-\alpha}}{\prod_{\alpha \in \Phi^+_{\nc}} 1 - e^{-\alpha}}
$$
with the specialization
\begin{equation}
\label{b0}
    \b(0) = \frac{\prod_{\alpha \in \Phi^+_\c}1-e^{-\alpha}}{\prod_{\alpha \in \Phi^+_{\nc}} 1 - e^{-\alpha}}.
\end{equation}
Our main result will interpret coefficients in $\b(0)$, within the context of Section \ref{s:2ncroots}, as the integers $\varepsilon^\la_{\mu,\nu}$  from the Enright resolutions in Definition \ref{def:epsilon}.  (The reader who is impatient to see this result may proceed to Section \ref{s:2ncroots}, which depends only tangentially on the combinatorics in the next two sections.)

\section{Discrete series and Howe duality: one noncompact simple root} 
\label{s:HermSymm}
We return to the concrete setting in which $G_0$, $K_0$, $\g$, $\k$, etc.\ denote the specific objects in Section \ref{s:classicalinvariant}. Note that $$\text{rank }G_0 = \text{rank }\SU(p,q) = p+q-1 = \text{rank }\S(\U(p) \times \U(q)) = \text{rank }K_0,$$
so that $\SU(p,q)$ satisfies Harish--Chandra's criterion and thus has discrete series representations.  As it turns out, we have already encountered one example of discrete series representations in the Howe decomposition \eqref{howedecomp}.  Recall that inside the stable range $n\geq p+q$, the infinite-dimensional $\widetilde F^\la_{p,q}$ decomposes as a $\k$-module as in \eqref{kdecomp}.  In this case, $\widetilde F^\la_{p,q}$ is a discrete series representation (or limit thereof) of $\SU(p,q)$, and by \eqref{kdecomp} its lowest $\k$-type is $F^{\la^+ }_p \otimes F^{\la^-}_q$. (See \cite{htw}, Theorem 3.2(b).) Therefore, inside the stable range, by regarding $\la$ as the pair $(\la^+,\la^-)$ of highest weights, we can write
$$
\widetilde F^\la_{p,q} \cong \mathcal D^\la
$$
as a discrete series representation of $\SU(p,q)$.
Since in the setting of Section \ref{s:classicalinvariant} the embedding $\k \subset \g$ is block diagonal in nature (see Figure \ref{fig:1nc}), there is exactly one noncompact simple root, namely $\alpha_p$, and so $(G_0,K_0)$ is a Hermitian symmetric pair.  

\begin{figure}[h]
\captionsetup[subfigure]{labelformat=empty}
 
\begin{subfigure}[t]{0.28\textwidth}
\includegraphics[width=0.9\linewidth]{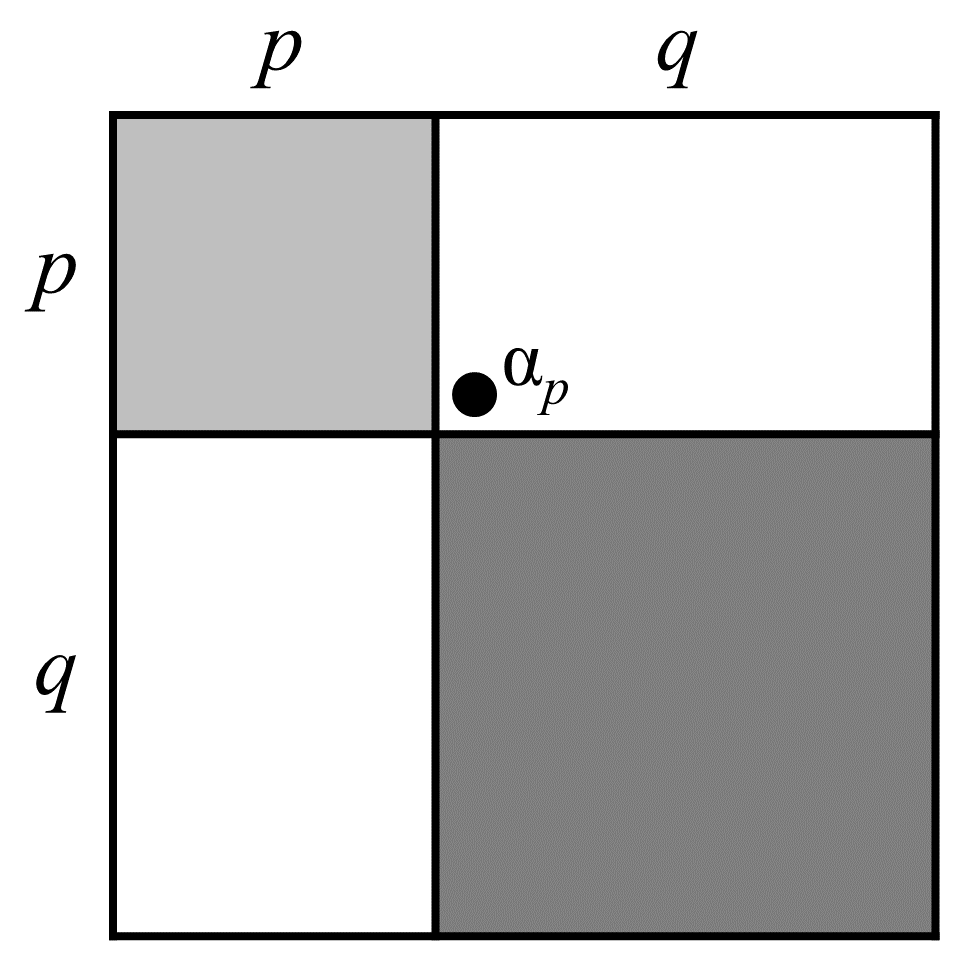}
\caption{$\k=\s(\gl_p \oplus \gl_q)$}
\end{subfigure}
\qquad
\begin{subfigure}[t]{0.28\textwidth}
\includegraphics[width=0.9\linewidth]{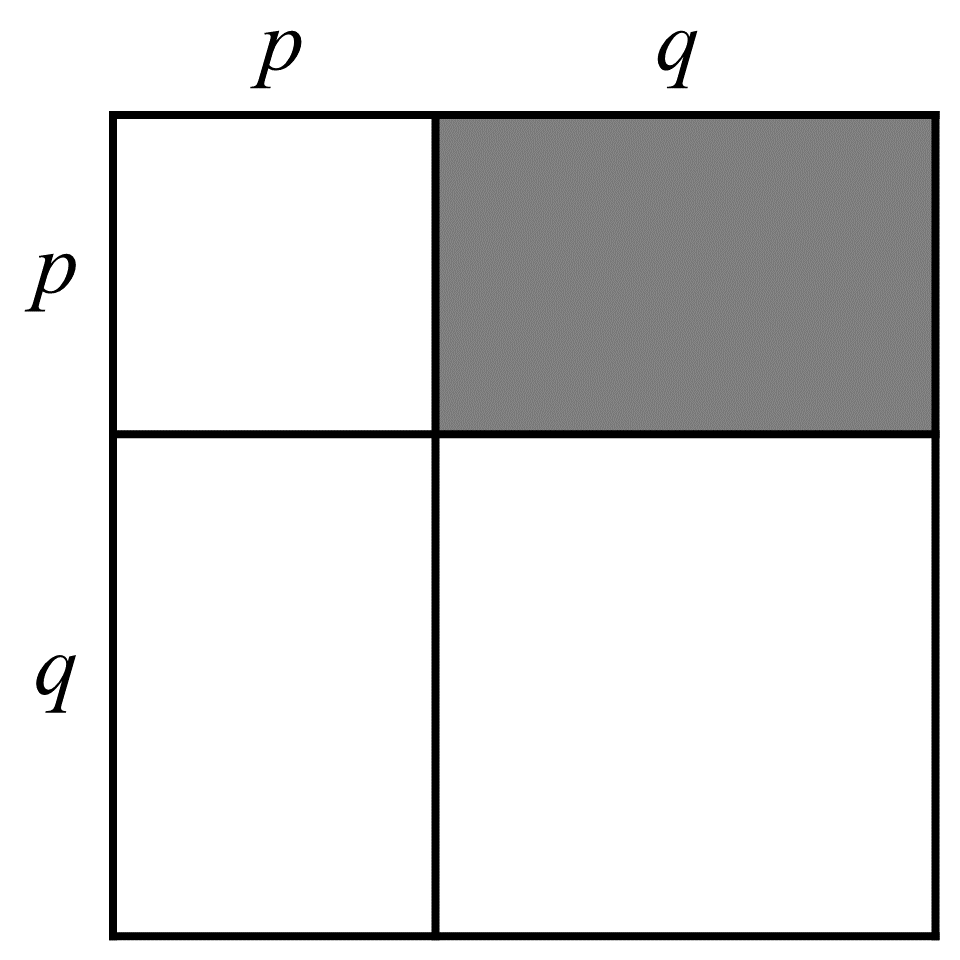}
\caption{$\u^+ = \M_{p,q} $}
\end{subfigure}

\caption{The Hermitian symmetric case (i.e., one noncompact simple root) in Type A, where $\g = \sl_{p+q}$.}
\label{fig:1nc}
\end{figure}

In this Hermitian symmetric case for $G_0 = \SU(p,q)$, we can derive a a combinatorial expression, \textit{without} signs, for Blattner's formula.  Since $\k=\s(\gl_p \oplus \gl_q)$, the weights $\delta$ and $\eta$ in Blattner's formula are actually pairs $\delta=(\delta^p,\delta^q)$ and $\eta=(\eta^p,\eta^q)$.  Here $\delta^p$ and $\eta^p$ are partitions of length at most $p$ (thus highest weights of polynomial representations of $\GL_p$), while $\delta^q$ and $\eta^q$ are partitions of length at most $q$ (and thus highest weights of polynomial representations of $\GL_q$).  Recall that for partitions $\al,\be,\gamma$, with lengths at most $k$, the \textbf{Littlewood--Richardson coefficient} $c^\gamma_{\al,\be}$ is the multiplicity of $F^\gamma_k$ in the tensor product $F^\al_k \otimes F^\be_k$, under the restriction of $\GL_k \times \GL_k$ to its diagonal subgroup $\Delta(\GL_k) \cong \GL_k$. 

\begin{prop}
\label{prop:hermitiancomb}
Let $\g = \sl_{p+q}$, with $\Pi_{\nc} = \{\al_p\}$; hence $\k = \s(\gl_p \oplus \gl_q)$ embedded block diagonally in $\g$.  Let $\delta=(\delta^p,\delta^q)$, $\eta = (\eta^p,\eta^q) \in P_+(\k)$.  Then
\begin{equation}
\label{hermitiancomb}
    B(\delta,\eta) = \sum_\xi c_{\xi, \eta^p}^{\delta^p} c_{\xi, \eta^q}^{\delta^q}
\end{equation}
where the sum is over all partitions $\xi$ such that $\ell(\xi) \leq \min(p,q)$.
\end{prop}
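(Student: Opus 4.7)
The plan is to substitute two classical identities into Blattner's alternating sum.

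Since $\Phi^+_{\nc} = \{e_i - e_{p+j} : 1 \le i \le p,\ 1 \le j \le q\}$ is in bijection with the entries of a $p \times q$ grid, the partition function $Q(\zeta)$ counts nonnegative integer $p \times q$ matrices whose row sums equal the first $p$ coordinates $\zeta^p$ of $\zeta$ and whose column sums equal the negatives $-\zeta^q$ of the last $q$ coordinates.  By the RSK correspondence (equivalently, the Cauchy identity $\prod_{i,j}(1-x_iy_j)^{-1} = \sum_\xi s_\xi(x)s_\xi(y)$), this count splits as $Q(\zeta) = \sum_\xi K_{\xi, \zeta^p}\,K_{\xi, -\zeta^q}$, summed over partitions $\xi$ of length at most $\min(p,q)$, with $K_{\xi,\mu}$ taken to vanish when $\mu$ has a negative entry.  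Because $W_\k = S_p \times S_q$ acts blockwise and both $\rho_\c$ and $\eta$ split as $(\rho_p,\rho_q)$ and $(\eta^p,\eta^q)$, substituting this into Blattner's formula factors the double alternating sum along the two blocks:
\[
B(\delta,\eta) \;=\; \sum_\xi \Bigl(\sum_{w_p \in S_p}(-1)^{\ell(w_p)} K_{\xi,\, w_p(\delta^p+\rho_p)-\rho_p-\eta^p}\Bigr)\Bigl(\sum_{w_q \in S_q}(-1)^{\ell(w_q)} K_{\xi,\, \eta^q+\rho_q - w_q(\delta^q+\rho_q)}\Bigr).
\]

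To identify each inner sum as a Littlewood--Richardson coefficient I appeal to the Kostant/Brauer--Klimyk formula: for $\gl_k$-partitions $\lambda,\mu,\nu$ of length at most $k$, one has
\[
c^\nu_{\lambda,\mu} \;=\; \sum_{w \in S_k}(-1)^{\ell(w)}\, K_{\lambda,\, w(\nu+\rho)-\rho-\mu}.
\]
The $S_p$-factor matches this template immediately, giving $c^{\delta^p}_{\xi,\eta^p}$.  The $S_q$-factor has the opposite sign on $w_q$, a legacy of the $-\zeta^q$ in the RSK step; using the Weyl-invariance $K_{\xi,\mu} = K_{\xi,w\mu}$ of weight multiplicities and re-indexing $w_q \mapsto w_q^{-1}$ rewrites this sum in standard Brauer--Klimyk form, identifying it as $c^{\delta^q}_{\xi,\eta^q}$.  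Multiplying the two factors yields \eqref{hermitiancomb}.

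The main obstacle is the $S_q$-step: the asymmetry between $\zeta^p$ and $-\zeta^q$ in the matrix interpretation of $Q$ means the two Weyl sums are not superficially symmetric, and careful bookkeeping with Weyl-invariance and the orientation of $\rho_q$ is needed to confirm that the second sum collapses to $c^{\delta^q}_{\xi,\eta^q}$ rather than the adjacent variant $c^{\eta^q}_{\xi,\delta^q}$.
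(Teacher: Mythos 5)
Your approach is genuinely different from the paper's, and on its face more general: the paper identifies $B(\delta,\eta)$ with a $\k$-type multiplicity via the Hecht--Schmid theorem and then computes that multiplicity with Cauchy's identity and the Littlewood--Richardson rule, so the paper's proof (as written) really only covers the range where $\mathcal D^\eta$ is an honest discrete series. Your route --- expanding $Q$ with RSK into a product of Kostka numbers and then collapsing each blockwise Weyl sum by the Brauer--Klimyk (Kostant/Steinberg) formula --- would, if correct, prove the identity for all $\delta,\eta\in P_+(\k)$ without invoking Hecht--Schmid. That would be a real gain. The factorization of the $W_\k$-sum into $S_p\times S_q$ pieces and the identification of the $S_p$-factor with $c^{\delta^p}_{\xi,\eta^p}$ are both correct.

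However, the step you yourself flag as "the main obstacle" genuinely fails as you have set things up, and the failure is not a matter of bookkeeping. Carrying out the manipulations you describe: by $W$-invariance of weight multiplicities, $K_{\xi,\,\eta^q+\rho_q-w_q(\delta^q+\rho_q)} = K_{\xi,\,w_q^{-1}(\eta^q+\rho_q)-\rho_q-\delta^q}$, and re-indexing $w_q\mapsto w_q^{-1}$ gives $\sum_{w}(-1)^{\ell(w)}K_{\xi,\,w(\eta^q+\rho_q)-\rho_q-\delta^q}$, which is exactly the Brauer--Klimyk expression for $c^{\eta^q}_{\xi,\delta^q}$ --- the "adjacent variant" you hoped to rule out, and \emph{not} $c^{\delta^q}_{\xi,\eta^q}$. (Already for $\gl_1$ these differ: $c^{\delta}_{\xi,\eta}$ requires $\delta-\eta=\xi\ge 0$, whereas your $S_q$-sum enforces $\eta-\delta=\xi\ge 0$.) So with your reading --- $\Phi^+_{\nc}$ the upper-right block and $\delta,\eta$ identified with the \emph{naive} concatenations $(\delta^p,\delta^q)$, $(\eta^p,\eta^q)$ in $\varepsilon$-coordinates --- the alternating sum computes $\sum_\xi c^{\delta^p}_{\xi,\eta^p}\,c^{\eta^q}_{\xi,\delta^q}$, not the right-hand side of \eqref{hermitiancomb}. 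The missing ingredient is that the identification of the pair $(\delta^p,\delta^q)$ with a $\k$-weight in $\varepsilon$-coordinates is \emph{not} the naive concatenation: one of the $\gl$-factors must be embedded with a dual/negative-transpose twist (the paper makes exactly this point explicit in Section~\ref{s:2ncroots}, writing $\llbracket\la,\mu,\nu\rrbracket=(-\mu_p,\dots,-\mu_1,\la,\nu)$, and the same issue is already present in the Hermitian case). Once you conjugate one block by $-w_0$, the duality $c^\nu_{\lambda,\mu}=c^{-w_0\mu}_{\lambda,-w_0\nu}$ converts the $c^{\eta^q}_{\xi,\delta^q}$-type factor into the desired $c^{\delta^q}_{\xi,\eta^q}$. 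Without building this twist into the dictionary between $(\delta^p,\delta^q)$ and a genuine $\varepsilon$-tuple, your argument proves a different (and generically false) identity.
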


\begin{proof}
By Cauchy's identity, we have the following decomposition as a $K$-module:
\begin{equation}
     \label{cauchy}
     \C[M_{p,q}] \cong \bigoplus_{\xi} F_p^\xi \otimes F_q^\xi,
 \end{equation}
where $\xi$ ranges over all partitions of length at most $\min(p,q)$.  Starting from \eqref{kdecomp}, and putting $\eta^p = \eta^+$ and $\eta^q = \eta^-$, we have
\begin{align*}
     \mathcal D^\eta \cong \widetilde F^\eta_{p,q} & \cong \C[M_{p,q}] \otimes \left(F^{\eta^p}_p \otimes F^{\eta^q}_q \right)\\
    & \cong \left(\bigoplus_{\xi} F_p^\xi \otimes F_q^\xi\right) \otimes \left(F^{\eta^p}_p \otimes F^{\eta^q}_q \right)\\
     & \cong \bigoplus_\xi \left(F_p^\xi \otimes F^{\eta^p}_p \right) \otimes \left( F^\xi_q \otimes F^{\eta^q}_q\right)\\
     & \cong \bigoplus_\xi \left( \bigoplus_{\delta^p} c^{\delta^p}_{\xi, \eta^p} F_p^{\delta^p}\right) \otimes \left( \bigoplus_{\delta^q} c^{\delta^q}_{\xi, \eta^q} F_q^{\delta^q}\right)\\
     &\cong \bigoplus_{\delta^p,\delta^q} \left( \sum_\xi c^{\delta^p}_{\xi, \eta^p} c^{\delta^q}_{\xi, \eta^q}\right) F_p^{\delta^p} \otimes F_q^{\delta^q}\\
     & = \bigoplus_{\delta^p,\delta^q} B(\delta,\eta) \, F_p^{\delta^p} \otimes F^{\delta^q}_{q,}
\end{align*}
where $\delta^p$ and $\delta^q$ range over all partitions of length at most $p$ and $q$ respectively.  Comparing the coefficients in the last two lines, we see that the proposition holds.
\end{proof}

One benefit of Proposition \ref{prop:hermitiancomb} is that there are no negative signs in the sum on the right-hand side of \eqref{hermitiancomb}, whereas Blattner's formula itself is an alternating sum.  One application of our main result will be a similar combinatorial expression, without signs, for discrete series representations \textit{outside} the Hermitian symmetric case (in particular, with \textit{two} noncompact simple roots).  For this, we will need a generalization of the classical Littlewood--Richardson coefficients, which we present in the following section.

\section{Generalized Littlewood--Richardson coefficients}
\label{s:LRC}

In the Howe decomposition \eqref{howedecomp}, the infinite-dimensional modules $\widetilde F^\la_{p,q}$ actually provide combinatorial information about the representation theory of the finite-dimensional modules $F^\la_n$.  Using the method of seesaw pairs, the authors of \cite{cew} describe a generalization of the Littlewood--Richardson coefficients $c^\gamma_{\al, \be}$ which applies to \textit{multi}-tensor product multiplicities for \textit{rational} (not only polynomial) representations of $\GL_n$.  Below we outline the generalization in the case of only two tensor factors, since this will suffice for the present paper.  

Let $\al, \be, \gamma$ be $n$-tuples of weakly decreasing integers (i.e., highest weights indexing rational $\GL_n$-representations).  We wish to obtain the multiplicity of $F^\gamma_n$ inside $F^\al_n \otimes F^\be_n$, which we will denote by $\LRC{\gamma}{\al,\be}$.  To this end, we consider all possible ``hollow contingency tables'' of the form
\begin{equation*}
\begin{blockarray}{c c c c}
& \al^- & \be^- & \gamma^+\\
\begin{block}{c [c c c]}
\al^+ & 0 & * & *\\
\be^+ & * & 0 & *\\
\gamma^- & * & * & 0\\ \end{block}
\end{blockarray}
\end{equation*}
where each star denotes an arbitrary partition (0 being the empty partition, corresponding to the trivial $\GL_n$-representation).  Note that each row and column naturally corresponds to a (classical) Littlewood--Richardson coefficient; for example, the first row corresponds to $c^{\al^+}_{*,*}$.  To obtain the multiplicity $\LRC{\gamma}{\al,\be}$, multiply these six Littlewood--Richardson coefficients together, and then sum over all such tables.

\begin{exam}
Let $\al=(1,0,\ldots,0)$ and $\be = (0,\ldots,0,-1)$.  Hence $F^\al_n \cong \C^n$ is the defining representation of $\GL_n$, while $F^\be_n$ is its dual $(\C^n)^*$.  Then $F^\al_n \otimes F^\be_n \cong \M_n(\C) \cong \gl_n$, the adjoint representation of $\GL_n$.  We have $\al^+ = (1)$, $\al^- = 0$, $\be^+ = 0$, and $\be^- = (1)$. Treating $\gamma$ as the unknown, there are only two ways of filling the contingency table so that the product of Littlewood--Richardson coefficients is nonzero, namely
$$
\begin{blockarray}{c c c c}
& 0 & (1) & \gamma^+\\
\begin{block}{r [c c c]}
(1) & 0 & 0 & (1)\\
 0 & 0 & 0 & 0\\
 \gamma^- & 0 & (1) & 0\\ \end{block}
 \end{blockarray}
 \qquad \text{and} \qquad
 \begin{blockarray}{c c c c}
 & 0 & (1) & \gamma^+\\
 \begin{block}{r [c c c]}
 (1) & 0 & (1) & 0\\
 0 & 0 & 0 & 0\\
 \gamma^- & 0 & 0 & 0\\ \end{block}
 \end{blockarray}.
 $$
 The first table forces $\gamma=[(1),(1)]=(1,0,\ldots,0,-1)$, and the second table forces $\gamma = 0$; in each case, by a simple application of the Pieri rule, all six Littlewood--Richardson coefficients are 1.  Hence $F^\al_n \otimes F^\be_n = F^{(1,0,\ldots,0,-1)}_n \oplus F^0_n$, or in other words, the adjoint representation $\gl_n$ is the direct sum $\sl_n \oplus \C I$.
\end{exam}

As an immediate advantage, which the reader can check, we can rewrite \eqref{hermitiancomb} as a single generalized Littlewood--Richardson coefficient:
$$
B(\delta,\eta) = \LRC{[\eta^p,\eta^q]}{\delta^p,\:[0,\delta^q]}.
$$
We will use these generalized coefficients again in Theorem \ref{thm:LRC2nc} to write down a combinatorial expression for Blattner's formula with two noncompact simple roots.

\section{Discrete series and Howe duality: two noncompact simple roots}
\label{s:2ncroots}

It turns out that the classical decomposition in \eqref{howedecomp} is related, in a very different way, to certain discrete series representations \textit{outside} the Hermitian symmetric case outlined in Section \ref{s:HermSymm}.  In fact, the thrust of our main result is that we can read off the character theory of $\C[V]$ --- in particular the integers $\varepsilon^\la_{\mu,\nu}$ and hence the Enright resolutions of the modules $\widetilde F^\la_{p,q}$ --- from the formal series $\b(0)$ for the group $\SU(n,p+q)$ with \textit{two} noncompact simple roots.  To distinguish this new setting from the Hermitian symmetric setting of Section \ref{s:HermSymm}, we will now decorate all of our notation with prime symbols; a complete summary is found in Table \ref{table:notation}.  See also Figure \ref{fig:2nc}.  
\begin{table}[ht]
\caption{}
\centering
\begin{tabular}{ |c||c| c||c |c|}
 \hline
 & \multicolumn{2}{c ||}{Sections \ref{s:classicalinvariant} and \ref{s:HermSymm}} & \multicolumn{2}{c|}{Section \ref{s:2ncroots}} \\ \hline \hline
 Classical representation & $V$ & $\M_{n,p} \oplus \M_{n,q}$ & -- & -- \\ \hline
 
 Group & $G_0$ & $\SU(p,q)$ & $G'_0$ & $\SU(n,p+q)$ \\ \hline
 Compl.\ Lie algebra & $\g$ & $\sl_{p+q}$ & $\g'$ & $\sl_{n+p+q}$\\ \hline
 
 Nc simple roots & $\Pi_{\nc}$ & $\{\al_p\}$ & $\Pi'_{\nc}$ & $\{\al_p,\al_{p+n}\}$ \\ \hline
 
 Compl.\ max. compact & $\k$ & $\s(\gl_p \oplus \gl_q)$ & $\k'$ & $\s(\gl_n \oplus \gl_{p+q})$ \\ \hline
 
 Levi subalgebra & -- & -- & $\m'$ & $\s(\gl_n \oplus \gl_p \oplus \gl_q)$ \\ \hline
 
  $\bigoplus$ pos nc root spaces & $\u^+$ & $\M_{p,q}$ & $\u'^+$ & $\M_{p,n} \oplus \M_{n,q}$\\ \hline
  
  $\bigoplus$ neg nc root spaces & $\u^-$ & $\M_{q,p}$ & $\u'^-$ & $\M_{n,p} \oplus \M_{q,n}$\\ \hline
 
\end{tabular}
\label{table:notation}
\end{table}

\begin{figure}[h]
\captionsetup[subfigure]{labelformat=empty}
 
\begin{subfigure}[t]{0.28\textwidth}
\includegraphics[width=0.9\linewidth]{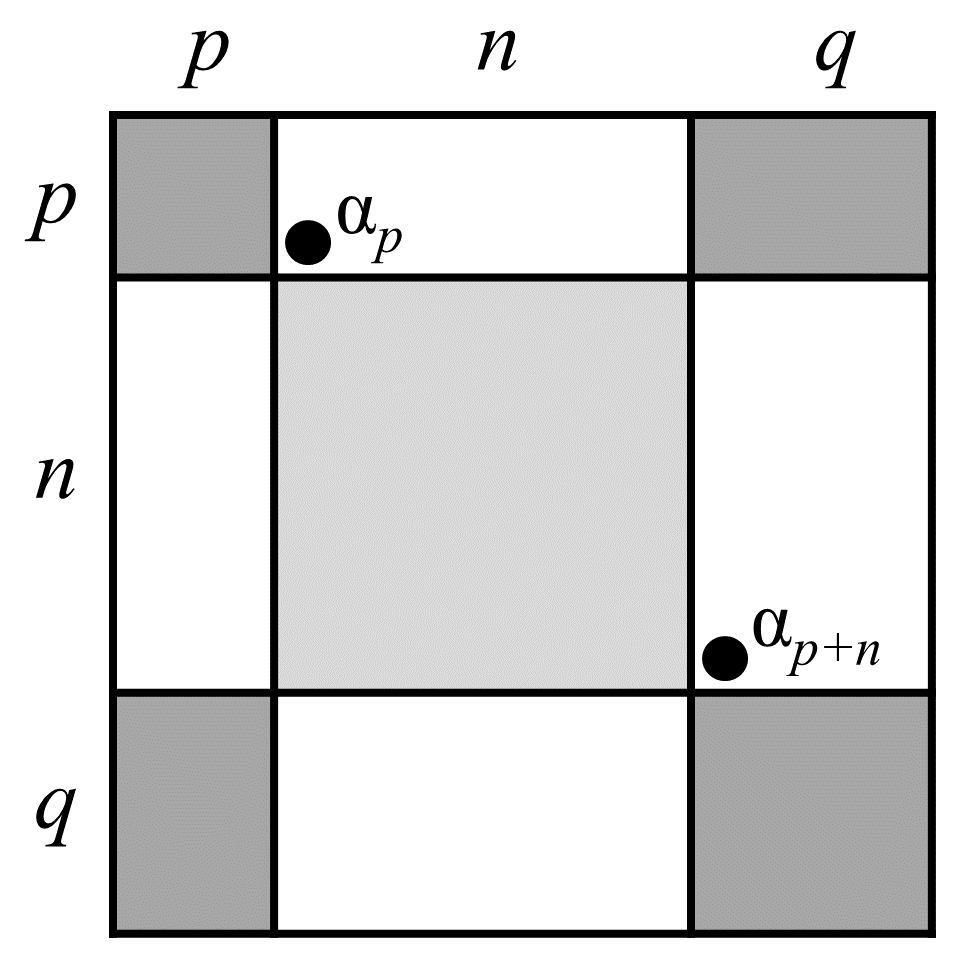} 
\caption{$\k' = \s(\gl_n \oplus \gl_{p+q})$}
\end{subfigure}
\hfill
\begin{subfigure}[t]{0.28\textwidth}
\includegraphics[width=0.9\linewidth]{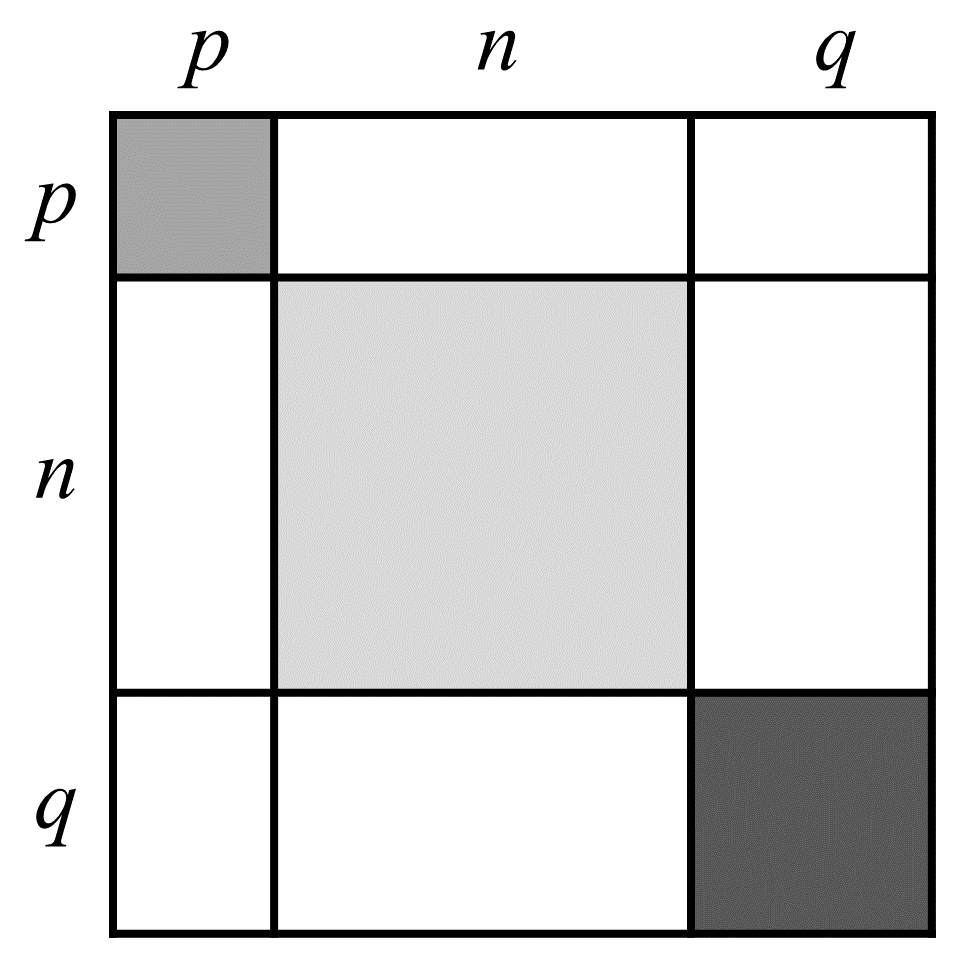}
\caption{$\m'=\s(\gl_n \oplus \gl_p\oplus \gl_q)$}
\end{subfigure}
\hfill
\begin{subfigure}[t]{0.28\textwidth}
\includegraphics[width=0.9\linewidth]{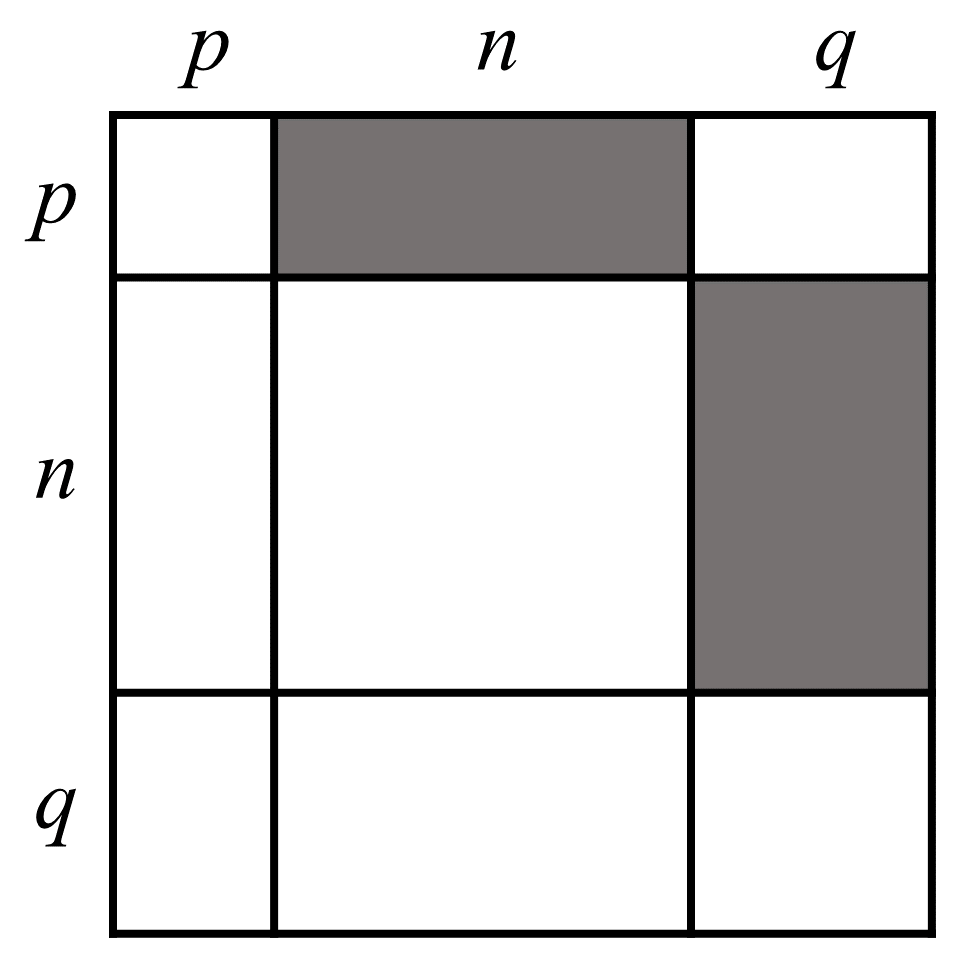}
\caption{$\u'^+ = \M_{p,n} \oplus \M_{n,q}$}
\end{subfigure}

\caption{A visual companion to the notation in Table \ref{table:notation} for Section \ref{s:2ncroots}; regard as subspaces of $\g' = \sl_{n+p+q}$.}
\label{fig:2nc}
\end{figure}

We re-approach Blattner's formula in a seemingly unrelated context.  Let $G'_0 = \SU(n,p+q)$, with the complexified Lie algebra $\g' = \sl_{n+p+q}$.  Suppose that $\g'$ has two noncompact simple roots, namely $\Pi'_{\nc} = \{ \al_p, \al_{p+n} \}$.  This implies that $\g'$ has maximal compact subalgebra $\k' = \s(\gl_n \oplus \gl_{p+q})$, where the $\gl_n$ is embedded in the ``middle'' and $\gl_{p+q}$ is embedded in the ``four corners.''  Hence the direct sum of the positive noncompact root spaces $\u'^+ = \bigoplus_{\al \in \Phi '^+_{\nc}} \g'_\al$ is embedded in $\g'$ as the two blocks in the upper-right, so that $\u'^+ \cong \M_{p,n} \oplus \M_{n,q}$ as a vector space.  Likewise, the sum $\u'^-$ of the negative noncompact root spaces is embedded in the two corresponding blocks in the lower-left, so that $\u' \cong \M_{n,p} \oplus \M_{q,n}$ as a vector space.  Special importance will be played by the Levi subalgebra $\m' = \s(\gl_n \oplus \gl_p \oplus \gl_q) \subset \k'$, which is the Lie algebra of $M' = \S(\GL_n \times \GL_p \times \GL_q)$.

We will write a weight of $\g'$ as a triple 
$$\llbracket\la,\mu,\nu\rrbracket := (\underbrace{-\mu_p,\ldots,-\mu_1}_{\mu^*},\underbrace{\la_1,\ldots,\la_n}_\la,\underbrace{\nu_1,\ldots,\nu_q}_\nu)
$$
where $\la \in P(\gl_n)$, $\mu \in P(\gl_p)$, and $\nu \in P(\gl_q)$.  The resulting $(n+p+q)$-tuple on the right-hand side is written in terms of the standard coordinates $\varepsilon_i:{\rm diag}[h_1,\ldots,h_{n+p+q}] \longmapsto h_i$ on $\g'$.  Notice that the order of the three weights in $\llbracket\la,\mu,\nu\rrbracket$ follows our usual alphabetical order $n,p,q$, whereas the explicit tuple transposes $\la$ and $\mu$ in order to respect the embedding of $\m'$ in $\g'$.  The reason for the dual on $\mu$ will soon become apparent, in the proof of Lemma \ref{lemma:deltabarcancel}: in order to line up the actions of $M'$ in the Howe duality setting and in the Blattner setting, we will need to regard the summand $\gl_p$ as being embedded in $\g'$ with a twist (i.e., negative transpose). 

A weight $\llbracket\la,\mu,\nu\rrbracket \in P(\g')$ is $\k'$-dominant if and only if \begin{equation*}
\la_1 \geq \cdots \geq \la_n \qquad \text{and} \qquad -\mu_p \geq \cdots \geq -\mu_1 \geq \nu_1 \geq \cdots \geq \nu_q.
\end{equation*}  We will need this fact only in the proof of Theorem \ref{thm:LRC2nc}. For most of this paper, we will actually be concerned with $\m'$-dominant weights, in which we drop the condition $-\mu_1 \geq \nu_1$; in other words, $\la \in P_+(\gl_n)$ and $\mu \in P_+(\gl_p)$ and $\nu \in P_+(\gl_q)$.  Note that for any $\la,\mu,\nu$ satisfying the conditions \eqref{howeconditions}, the weight $\llbracket\la,\mu,\nu\rrbracket$ is automatically $\m'$-dominant.

\begin{figure}[h]
\begin{subfigure}[t]{0.28\textwidth}
\includegraphics[width=0.9\linewidth]{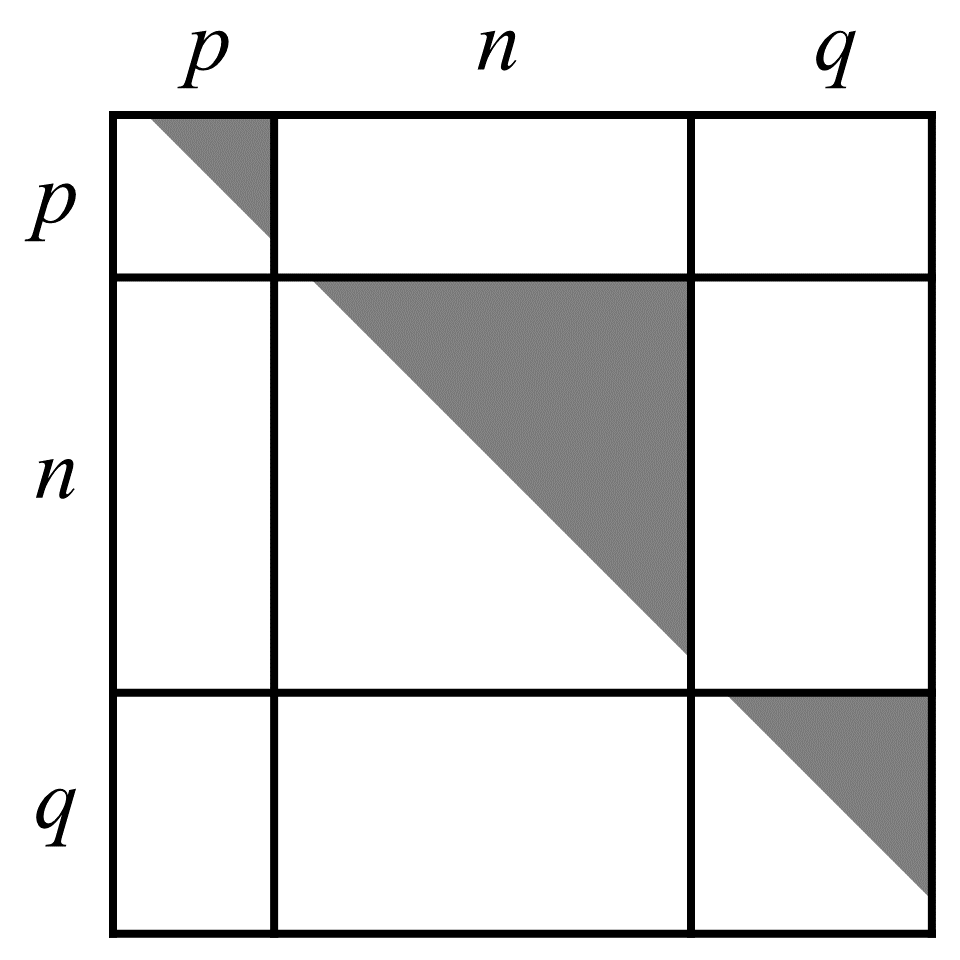} 
\caption{$\Phi_{\m'}$}
\label{subfig:phim}
\end{subfigure}
\hfill
\begin{subfigure}[t]{0.28\textwidth}
\includegraphics[width=0.9\linewidth]{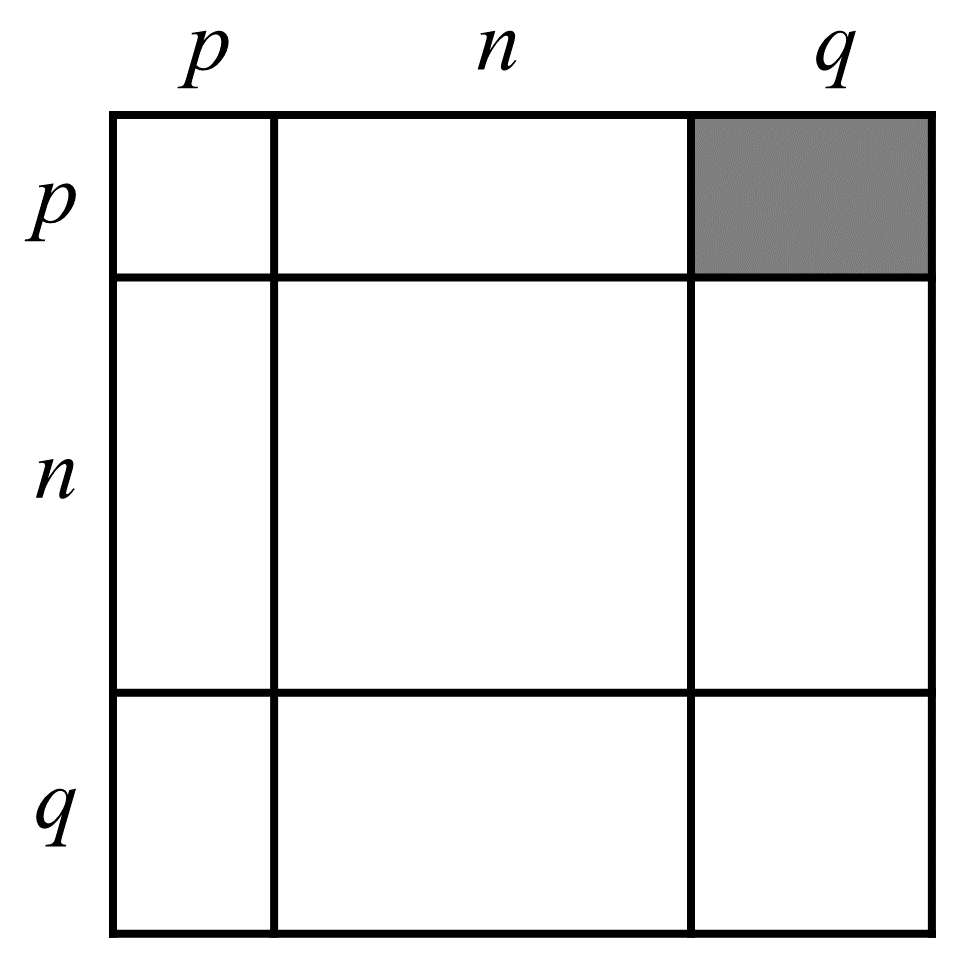}
\caption{$\overline{\Phi_{\m'}}$}
\label{subfig:phimbar}
\end{subfigure}
\hfill
\begin{subfigure}[t]{0.28\textwidth}
\includegraphics[width=0.9\linewidth]{u.PNG}
\caption{$\Phi'^+_{\nc}$}
\label{subfig:phinc}
\end{subfigure}

\caption{Root spaces corresponding to three subsets of $\Phi'^+$.}
\label{fig:b0numden}
\end{figure}

With the aim of rewriting $\b(0)$ in a helpful way, we now partition the positive compact roots $\Phi_{\c}'^+$ of $\g'$ into two subsets, as in Figures \ref{subfig:phim} and \ref{subfig:phimbar}. Let $\Phi_{\m'}=\{\al \in \Phi'^+_{\c} \mid \g'_\al \subset \m'\}$, which contains those positive compact roots whose root spaces span the three triangular regions in Figure \ref{subfig:phim}.  Let $\overline{\Phi_{\m'}}$ denote the complement of $\Phi_{\m'}$ in $\Phi '^+_{\c}$, which contains those positive compact roots whose root spaces span the upper-right $p\times q$ block in Figure \ref{subfig:phimbar}.  Now define the products
$$\displaystyle \Delta_{\m'} =  \prod_{\al \in \Phi_{\m'}} 1-e^{-\al}, \qquad \displaystyle \overline{\Delta_{\m'}} = \prod_{\al \in \overline{\Phi_{\m'}}} 1-e^{-\al}, \qquad \displaystyle \Delta_{\u'^+}= \prod_{\al \in \Phi'^+_{\nc}} 1-e^{-\al},$$
where the ``$\Delta$'' notation is meant to evoke the Weyl denominator from the Weyl character formula.  Now we can rewrite \eqref{b0} as
\begin{equation}
\label{b0new}
    \b(0) = \frac{\prod_{\alpha \in \Phi^+_\c}1-e^{-\alpha}}{\prod_{\alpha \in \Phi^+_{\nc}} 1 - e^{-\alpha}} = \frac{\Delta_{\m'} \cdot \overline{\Delta_{\m'}}}{\Delta_{\u'^+}}.
\end{equation}

The following two lemmas capture the connection between the expression for $\b(0)$ in \eqref{b0new} and the classical invariant setting from Section \ref{s:classicalinvariant}.

\begin{lemma}
\label{lemma:deltabarcancel}
Consider $\C[\M_{p,q}]$ from the classical invariant setting in Section \ref{s:classicalinvariant}.  Then  $\overline{\Delta_{\m'}} = \big(\ch \C[\M_{p,q}]\big)^{-1}$ as a character of $\m'$.
\end{lemma}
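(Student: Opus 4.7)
The plan is to reduce each side of the claimed identity to the same explicit product indexed by $\{1,\ldots,p\}\times\{1,\ldots,q\}$, and then compare them.

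First I would identify $\overline{\Phi_{\m'}}$ explicitly. Since $\k'=\s(\gl_n\oplus\gl_{p+q})$ has the $\gl_{p+q}$ embedded in the ``four corners'' of $\g'$ (i.e., rows/columns $1,\ldots,p$ together with $p+n+1,\ldots,p+n+q$) while $\m'$ is the block-diagonal subalgebra $\s(\gl_n\oplus\gl_p\oplus\gl_q)$, the positive compact roots lying outside $\m'$ are precisely those of the form $\al=\varepsilon_i-\varepsilon_j$ with $i\in\{1,\ldots,p\}$ and $j\in\{p+n+1,\ldots,p+n+q\}$.  These are the $pq$ roots whose root spaces span the upper-right block in Figure \ref{subfig:phimbar}.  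Next I would rewrite each such root in terms of the $\m'$-coordinates $(\la,\mu,\nu)$.  Here the crucial ingredient is the twist built into the $\llbracket\la,\mu,\nu\rrbracket$ convention, which amounts to $\varepsilon_i=-\mu_{p+1-i}$ for $i\leq p$ and $\varepsilon_{p+n+l}=\nu_l$ for $1\leq l\leq q$.  Substituting gives $\al=-\mu_k-\nu_l$ (with $k=p+1-i$ and $l=j-p-n$), so that $e^{-\al}=e^{\mu_k+\nu_l}$ and therefore
$$
\overline{\Delta_{\m'}}=\prod_{k=1}^p\prod_{l=1}^q\bigl(1-e^{\mu_k+\nu_l}\bigr).
$$

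On the Howe-duality side, the Levi factor $\gl_n\subset\m'$ acts trivially on $\M_{p,q}$, so only the action of $\s(\gl_p\oplus\gl_q)=\k$ from \eqref{kactiononMpq} contributes.  A short direct computation on the coordinate functions $X_{ab}:X\mapsto X_{ab}$, which form a basis of $\M_{p,q}^*$, shows that $X_{ab}$ is an $\m'$-weight vector of weight $\mu_a+\nu_b$.  Since $\C[\M_{p,q}]\cong\Sym(\M_{p,q}^*)$, we obtain
$$
\ch\C[\M_{p,q}]=\prod_{a=1}^p\prod_{b=1}^q\frac{1}{1-e^{\mu_a+\nu_b}},
$$
whose reciprocal is exactly the product computed above for $\overline{\Delta_{\m'}}$, proving the lemma.

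The only potential obstacle is careful bookkeeping of sign and index conventions in the first part of the argument: one must confirm that the twist (the negative transpose in the embedding $\gl_p\hookrightarrow\g'$) is precisely what is needed to turn $-\varepsilon_i+\varepsilon_j$ into $+\mu_k+\nu_l$ without a sign error, so that the exponent $\mu_k+\nu_l$ in $\overline{\Delta_{\m'}}$ matches the weight of $X_{ab}$ in $\C[\M_{p,q}]$ rather than its negative.  This is the very reason for the dualization on $\mu$ in the $\llbracket-,-,-\rrbracket$ notation, which the paper foreshadows immediately before the lemma; once the conventions are aligned, the identity is transparent.
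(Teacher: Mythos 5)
Your proof is correct and follows essentially the same route as the paper, which packages the very same weight comparison as an $\m'$-module isomorphism $\psi:\M_{p,q}\to\bigoplus_{\al\in\overline{\Phi_{\m'}}}\g'_\al$, $\psi(E_{i,j})=E_{i,p+n+j}$, and then passes to $\Sym$ of the dual before taking characters. The one nontrivial ingredient you flag---that the inverse-transpose twist on the $\GL_p$ factor aligns the signs so that $-\al$ for $\al\in\overline{\Phi_{\m'}}$ matches the weight of $X_{ab}$---is exactly the content of the paper's intertwining check for $\psi$.
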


\begin{proof}
Let $\psi:\M_{p,q} \longrightarrow \displaystyle\bigoplus_{\al \in \overline{\Phi_{\m'}}} \g'_\al$ be the block ``upper-right'' embedding given by $\psi(E_{i,j}) = E_{i,p+n+j}$, for $i=1,\ldots,p$ and $j = 1, \ldots, q$.  (Here $E_{i,j}$ denotes the $p \times q$ matrix with a 1 as the $(i,j)$ entry and 0's elsewhere.)  Clearly $\psi$ is a vector space isomorphism; we claim that $\psi$ is in fact an isomorphism of $\m'$-modules.

To see this, recall the action of $K=\S(\GL_p \times \GL_q)$ on $\M_{p,q}$ given in \eqref{kactiononMpq} in the Howe duality setting, where $(g,h)\cdot X = g^{-T}Xh^{-1}$.  Here in the Blattner setting, we also have $K \subset M' \subset G'=\SL_{n+p+q}$ embedded block-diagonally with $\GL_p$ in the upper-left and $\GL_q$ in the lower-right.  In this embedding, ${\rm im}(\psi) \subset \g'$ is a $K$-module via the adjoint action, and the explicit action is $(g,h)\cdot X = gXh^{-1}$. Therefore, the $K$-actions in the Howe duality setting and in the Blattner setting are the same, up to a twist in the $\GL_p$-action. (This is remedied by embedding $\GL_p$ in the upper-left via its inverse transpose, and at the Lie algebra level, by embedding $\gl_p$ via its negative transpose.) Extending this $K$-action to $M'$ by letting the factor $\GL_n$ act trivially, we conclude that $\M_{p,q} \cong {\rm im}(\psi)$ as modules for $M'$, and thus for $\m'$, which proves the claim.

Now, ${\rm im}(\psi)^T:= \{X^T \mid X \in {\rm im}(\psi)\}$ is the embedding of $\M_{p,q}$ via the transpose into the lower-left block of $\g'$.  Furthermore, ${\rm im}(\psi)^T \cong {\rm im}(\psi)^*$ as an $M'$-module, which is clear from the adjoint action of $M' \subset G'$ on $\g'$.  Therefore
\begin{equation*}
    \Sym\big({\rm im}(\psi)^T\big) \cong \C[{\rm im}(\psi)] \cong \C[\M_{p,q}]
\end{equation*}
as $\m'$-modules. Now we can conclude that
\begin{align*}
    \overline{\Delta_{\m'}} &= \prod_{\al \in \overline{\Phi_{\m'}}} (1 - e^{-\al})\\
    &= \Big(\ch \Sym\big({\rm im}(\psi)^T\big)\Big)^{-1} \\
    &= \big(\ch \C[\M_{p,q}]\big)^{-1}
\end{align*}
as a character of $\m'$.
\end{proof}

\begin{lemma}
\label{lemma:V_iso_u'+}
We have $(\Delta_{\u'^+})^{-1} = \ch \C[V]$ as a character of $\m'$.
\end{lemma}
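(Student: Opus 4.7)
The plan is to mirror the proof of Lemma \ref{lemma:deltabarcancel}: exhibit an explicit $\m'$-module isomorphism $V \cong \u'^+$ (where $\u'^+$ carries the adjoint action of $\m' \subset \g'$), and then read off the character identity from $\C[V] \cong \Sym((\u'^+)^*)$.

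For the isomorphism, I would define $\phi = \phi_1 \oplus \phi_2 : V = \M_{n,p} \oplus \M_{n,q} \longrightarrow \u'^+ = \M_{p,n} \oplus \M_{n,q}$, where $\phi_1$ is the matrix transpose (sending $\M_{n,p}$ onto the upper-middle $p \times n$ block of $\g'$, that is, rows $1,\ldots,p$ and columns $p+1,\ldots,p+n$), and $\phi_2$ is the identity (sending $\M_{n,q}$ onto the middle-right $n \times q$ block, rows $p+1,\ldots,p+n$ and columns $p+n+1,\ldots,p+n+q$). Both are clearly vector-space isomorphisms onto their targets.

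Next, I would verify $M'$-equivariance, and hence $\m'$-equivariance. Recall from \eqref{Maction} that the Howe-side action is $(g_n,g_p,g_q)\cdot(X,Y) = (g_n^{-T} X g_p^{-1},\, g_n Y g_q^{-1})$. On the Blattner side, $M'$ embeds in $G' = \SL_{n+p+q}$ as $(g_n,g_p,g_q) \mapsto \mathrm{diag}(g_p^{-T},\, g_n,\, g_q)$ (with the twist on $g_p$ as in Lemma \ref{lemma:deltabarcancel}), so conjugation of block matrices gives the induced adjoint actions $Z \mapsto g_p^{-T} Z g_n^{-1}$ on the upper-middle block and $W \mapsto g_n W g_q^{-1}$ on the middle-right block. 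A short calculation using $(AB)^T = B^T A^T$ yields $\phi_1(g_n^{-T} X g_p^{-1}) = g_p^{-T} X^T g_n^{-1}$, matching the Blattner-side action; for $\phi_2$ the two actions coincide on the nose.

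Having established $V \cong \u'^+$ as $\m'$-modules, I would conclude $\C[V] = \Sym(V^*) \cong \Sym((\u'^+)^*)$, so that
\[
\ch \C[V] = \prod_{\alpha \in \Phi'^+_{\nc}} (1 - e^{-\alpha})^{-1} = (\Delta_{\u'^+})^{-1}
\]
as characters of $\m'$. The only subtlety is the one already navigated in Lemma \ref{lemma:deltabarcancel}: the negative-transpose twist in the embedding of $\gl_p$ is precisely what forces the $\M_{n,p}$ summand of $V$ to be transposed before embedding, while the untwisted $\gl_n$ and $\gl_q$ factors leave the $\M_{n,q}$ summand alone.
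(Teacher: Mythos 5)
Your proposal is correct and takes essentially the same approach as the paper: you define the same map $\varphi(X,Y)=(X^T,Y)$ (just written as $\phi_1\oplus\phi_2$), verify $M'$-equivariance by the same block-conjugation computation with the twisted $\GL_p$-embedding, and conclude by identifying $\C[V]$ with a symmetric algebra on the noncompact root spaces; the paper routes through $\Sym(\u'^-)$ while you use $\Sym((\u'^+)^*)$, but these are canonically isomorphic and give the identical character.
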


\begin{proof}
Let $\varphi: V \longrightarrow \u'^+$ be given by $\varphi(X,Y) = (X^T,Y)$.  We claim that $\varphi$ is an isomorphism of $\m'$-modules.

To see this, note that an element of $V$ is of the form $(X,Y)$, while an element of $\u'^+$ is of the form $(X^T,Y)$, where $X \in \M_{n,p}$ and $Y \in \M_{n,q}$.  Recall the action of $M'$ on $V$ given in \eqref{Maction}, from the Howe duality setting.  Here in the Blattner setting (as explained in the proof of Lemma \ref{lemma:deltabarcancel}), regard $\GL_p$ as being embedded in the upper-left block of $G'$ via inverse transpose; then the adjoint action of $M' \subset G'$ on $\u'^+ \subset \g'$ is given by
$$
(g_n,g_p,g_q) \cdot (X^T,Y) = (g_p^{-T} X^T g_n^{-1},\: g_n Y g_q^{-1}).
$$ 
For $g = (g_n,g_p,g_q) \in M'$, we must show that $g \circ \varphi = \varphi \circ g$.  But
\begin{align*}
    g \circ \varphi (X,Y) &= g(X^T,Y) \\
    &=(g_p^{-T} X^T g_n^{-1},\: g_n Y g_q^{-1})\\
    &= \varphi (g_n^{-T} X g_p^{-1},\: g_n Y g_q^{-1})\\
    &= \varphi \circ g(X,Y),
\end{align*}
by \eqref{Maction}, which proves the claim.

Now, observing from the adjoint $M'$-action that $\u'^+ \cong (\u'^-)^*$ as $M'$-modules, we have
\begin{equation*}
    \Sym(\u'^-) \cong \C[V]
\end{equation*}
as modules for $M'$, and therefore for $\m'$.  Therefore we have 
\begin{align*}
    \left(\Delta_{\u'^+}\right)^{-1} &=\prod_{\al \in \Phi^+_{\nc}}\frac{1}{1-e^{-\al}}\\
    &=\ch \Sym(\u'^-)\\
    &= \ch \C[V]
\end{align*}
as a character of $\m'$.
\end{proof}

At this point, we should observe that for $\xi \in P_+(\m')$, the Weyl character formula can be written as 
$$
\ch L_{\m'}(\xi) = \frac{\sum_{w \in W_{\m'}}(-1)^{\ell} e^{w(\xi + \rho)}}{e^\rho \prod_{\al \in \Phi_{\m'}} 1-e^{-\al}}  = \frac{\sum_{w \in W_{\m'}}(-1)^{\ell} e^{w(\xi + \rho)-\rho}}{\Delta_{\m'}},
$$
where $W_{\m'}$ is the Weyl group for $\m'$ and $\rho = \frac{1}{2} \sum_{\al \in \Phi_{\m'}} \al$.  Upon rearranging, this says that the product $\Delta_{\m'} \cdot \ch L_{\m'}(\xi)$ is the alternating sum of terms of the form $e^{w(\xi + \rho)-\rho}$.  But $e^{w(\xi + \rho)-\rho} \in P_+(\m')$ if and only if $w = 1$, which means that
\begin{equation*}
    \Delta_{\m'} \cdot \ch L_{\m'}(\xi) = e^\xi + \text{alternating sum of }e^{\text{nondominant $\m'$-weight}}\text{'s.}
\end{equation*}
More generally, consider an arbitrary $\m'$-module $L=\bigoplus_\xi m_\xi L_{\m'}(\xi)$, ranging over $\xi \in P_+(\m')$, with multiplicities $m_\xi \in \mathbb N$.  Then we have
\begin{equation}
\label{weyldenomeffect}
    \Delta_{\m'} \cdot \ch L = \sum_\xi m_\xi e^\xi + \text{alternating sum of }e^{\text{nondominant $\m'$-weight}}\text{'s.}
\end{equation}
The upshot is that multiplying the character of an $\m'$-module $L$ by $\Delta_{\m'}$ produces a sum of formal weights, in which the $\m'$-dominant weights are precisely the highest weights of the irreducible $\m'$-modules in the decomposition of $L$, the coefficients of which are their multiplicities in $L$.  

We have now arrived at our main result, uniting the Enright resolutions of Section \ref{s:classicalinvariant} with the discrete series representations in the present section.
\begin{thm}
\label{thm:mainresult}
 Let $\b(0)$ be the formal series in \eqref{b0}, in the context where the Lie algebra $\g' = \sl_{n+p+q}$ has the two noncompact simple roots $\al_p$ and $\al_{p+n}$.  Let $\la,\mu,\nu$ satisfy the conditions \eqref{howeconditions}, and let $\varepsilon^\la_{\mu,\nu}$ be as in Definition \ref{def:epsilon}.  Then
 $$
 \varepsilon^\la_{\mu,\nu} = \text{the coefficient of }e^{\llbracket\la,\mu,\nu\rrbracket} \text{ in }\b(0).
 $$
\end{thm}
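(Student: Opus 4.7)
The plan is to combine the two preceding lemmas with the character factorization \eqref{bigcharacter} and then invoke the observation \eqref{weyldenomeffect} to read off the coefficient of the dominant $\m'$-weight $e^{\llbracket\la,\mu,\nu\rrbracket}$. Starting from the rewriting \eqref{b0new}, namely $\b(0) = \Delta_{\m'} \cdot \overline{\Delta_{\m'}}/\Delta_{\u'^+}$, I would substitute Lemma \ref{lemma:V_iso_u'+} to replace $1/\Delta_{\u'^+}$ by $\ch \C[V]$, and then use \eqref{bigcharacter} to expand
\begin{equation*}
\b(0) = \Delta_{\m'} \cdot \overline{\Delta_{\m'}} \cdot \ch \C[\M_{p,q}] \cdot \sum_{\la,\mu,\nu} \varepsilon^\la_{\mu,\nu} \ch(F_n^\la \otimes F_p^\mu \otimes F_q^\nu).
\end{equation*}
Lemma \ref{lemma:deltabarcancel} says $\overline{\Delta_{\m'}} \cdot \ch \C[\M_{p,q}] = 1$ as a character of $\m'$, so this collapses to
\begin{equation*}
\b(0) = \Delta_{\m'} \cdot \sum_{\la,\mu,\nu} \varepsilon^\la_{\mu,\nu} \, \ch\bigl(F_n^\la \otimes F_p^\mu \otimes F_q^\nu\bigr).
\end{equation*}

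Next I would identify the tensor product $F_n^\la \otimes F_p^\mu \otimes F_q^\nu$ as the irreducible $\m'$-module $L_{\m'}(\llbracket\la,\mu,\nu\rrbracket)$. Here the ``twist'' on the $\gl_p$ summand (the negative transpose embedding spelled out in the proof of Lemma \ref{lemma:deltabarcancel}) is precisely what converts the highest weight $\mu$ of the polynomial $\GL_p$-representation $F_p^\mu$ into the string $(-\mu_p,\ldots,-\mu_1)$ that occupies the first $p$ standard coordinates in $\llbracket\la,\mu,\nu\rrbracket$. Under the conditions \eqref{howeconditions}, this weight is $\m'$-dominant, and different triples $(\la,\mu,\nu)$ yield distinct dominant weights.

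Finally, extending \eqref{weyldenomeffect} by $\mathbb Z$-linearity to virtual $\m'$-modules, multiplication by $\Delta_{\m'}$ converts the character sum above into
\begin{equation*}
\b(0) = \sum_{\la,\mu,\nu} \varepsilon^\la_{\mu,\nu}\, e^{\llbracket\la,\mu,\nu\rrbracket} + (\text{alternating sum of $e^\xi$ with $\xi$ non-$\m'$-dominant}).
\end{equation*}
Since $\llbracket\la,\mu,\nu\rrbracket$ is $\m'$-dominant and therefore cannot occur in the error term, the coefficient of $e^{\llbracket\la,\mu,\nu\rrbracket}$ in $\b(0)$ is exactly $\varepsilon^\la_{\mu,\nu}$, as claimed.

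The only real subtlety is the bookkeeping in the second paragraph: one must verify that the twisted embedding of $\gl_p$ really does make $\llbracket\la,\mu,\nu\rrbracket$ (rather than, say, $\llbracket\la,\mu^*,\nu\rrbracket$) the highest weight of $F_n^\la \otimes F_p^\mu \otimes F_q^\nu$ as an $\m'$-module, and that distinct admissible triples produce distinct $\m'$-dominant weights so that no cancellation occurs among the leading terms. Both facts reduce to the explicit description of $\m' = \s(\gl_n \oplus \gl_p \oplus \gl_q)$ in standard coordinates already set up before the theorem statement, so no new calculation is required beyond the two lemmas.
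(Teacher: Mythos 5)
Your proposal is correct and follows essentially the same path as the paper's proof: substitute the two lemmas into the rewritten $\b(0)$ from \eqref{b0new}, cancel $\overline{\Delta_{\m'}}$ against $\ch\C[\M_{p,q}]$ using \eqref{bigcharacter}, and apply \eqref{weyldenomeffect} to isolate the coefficient of the $\m'$-dominant weight $e^{\llbracket\la,\mu,\nu\rrbracket}$. Your extra remarks about the twist on the $\gl_p$ coordinates and the $\mathbb Z$-linear extension of \eqref{weyldenomeffect} to virtual characters are careful bookkeeping points that the paper handles implicitly (via its earlier discussion of $\llbracket\la,\mu,\nu\rrbracket$ and its remark that the sum is a virtual $\m'$-character), but they do not alter the argument.
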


\begin{proof}
From \eqref{b0new}, we have
$$
\b(0) = \frac{\Delta_{\m'} \cdot \overline{\Delta_{\m'}}}{\Delta_{\u'^+}}
$$
which, by Lemmas \ref{lemma:deltabarcancel} and \ref{lemma:V_iso_u'+}, becomes
$$
\b(0)=\Delta_{\m'} \cdot (\ch \C[\M_{p,q}])^{-1} \cdot \ch \C[V].
$$
Substituting for $\ch \C[V]$ from \eqref{bigcharacter}, we find that the two instances of $\ch \C[\M_{p,q}]$ cancel each other out:
\begin{align}
    \b(0) &= \Delta_{\m'} \cdot (\ch \C[\M_{p,q}])^{-1} \cdot \ch \C[\M_{p,q}] \cdot \sum_{\la,\mu,\nu} \varepsilon^\la_{\mu,\nu} \cdot \ch (F_n^\la \otimes F^{\mu}_p \otimes F^\nu_q) \nonumber \\
    &= \Delta_{\m'} \cdot \sum_{\la,\mu,\nu} \varepsilon^\la_{\mu,\nu} \cdot \ch (F_n^\la \otimes F^{\mu}_p \otimes F^\nu_q). \label{b0final}
\end{align}
The sum in the last line is a virtual $\m'$-character, and so by \eqref{weyldenomeffect}, we have
\begin{equation}
\label{b0inproof}
\b(0) = \sum_{\la,\mu,\nu} \varepsilon^\la_{\mu,\nu} \cdot e^{\llbracket\la,\mu,\nu\rrbracket} + \sum_{\xi \not \in P_+(\m')} c_\xi e^\xi
\end{equation}
with $c_\xi \in \mathbb Z$.  This completes the proof.
\end{proof}

\begin{exam}
\label{ex:b0}
We fully work out the case when $n=p=q=1$. Note that this is outside the stable range.  In this case, $\C[V] = \C[x,y]$, and the action of $M'=\S(\GL_1 \times \GL_1 \times \GL_1)$ upon a typical monomial follows from \eqref{Maction}:
$$
(g_n,g_p,g_q) \cdot x^ay^b = \left(g_n g_p x\right)^a \left(\frac{g_q}{g_n} y\right)^b = \left(g_n^{a-b} g_p^a g_q^b\right) x^a y^b,
$$
where $g_n,g_p,g_q \in \C^\times$.
Hence each monomial spans a one-dimensional subrepresentation of $M'$.  Note that by setting $\la = a-b$ and $c = \min(a,b)$, we can rewrite the formal sum of these monomials as
\begin{align*}
\sum_{a,b \in \mathbb N} x^a y^b &= \sum_{c=0}^\infty (xy)^c \left(\sum_{\la \geq 0} x^\la + \sum_{\la < 0} y^{-\la} \right) \\
&= \sum_{\la \geq 0} x^\la \sum_{c=0}^\infty (xy)^c + \sum_{\la < 0} y^{-\la} \sum_{c=0}^\infty (xy)^c.
\end{align*}

When $n=p=q=1$, each rational representation of $\GL_n$ and $\GL_p$ and $\GL_q$ is one-dimensional, indexed by a single integer $\la$ (nonnegative if the representation is polynomial), where the group action is $g \cdot z = g^\la z$.  Hence we will write $\C_n^\la$ for the representation $F_1^\la$ of $\GL_n=\GL_1$, and likewise for $\C_p^\la$ and $\C_q^\la$.  Comparing the two calculations above, we see that the Howe decomposition \eqref{howedecomp} in this case is
$$
\C[x,y] = \bigoplus_{\la \in \mathbb Z} \C_n^\la \otimes \widetilde F^\la_{1,1}
$$
where
$$
\widetilde F^\la_{1,1} = 
\begin{cases}
\C[xy] \otimes \C_p^{\la} \otimes \C_q^0, & \la \geq 0,\\
\C[xy] \otimes \C_p^0 \otimes \C^{-\la}_q,& \la<0.
\end{cases}
$$
Therefore the Enright resolution of $\widetilde F^\la_{1,1}$ contains only one term, and so for $\la,\mu,\nu \in \mathbb Z$ we have
\begin{equation*}
    \varepsilon^\la_{\mu,\nu}=
    \begin{cases}
    1,&\la=\mu\geq 0 \text{ and }\nu=0,\\
    1,&\la = -\nu < 0 \text{ and }\mu = 0,\\
    0 & \text{otherwise},
    \end{cases}
\end{equation*}
so that the triples $(\la,\mu,\nu)$ satisfying the first two cases are of the form 
\begin{equation}
\label{supportinexample}
    \mathbb N(1,1,0) \text{ and } \mathbb N(-1,0,1).
\end{equation}

Next we compute $\b(0)$ in order to check it against \eqref{supportinexample}.  We have $\g' = \sl_3$, with $\Pi'_{\nc} = \{\al_1,\al_2\} = \Pi'$.  Then $\Phi'^+_{\c}=\{\varepsilon_1 - \varepsilon_3\}$ and $\Phi'^+_{\nc} = \{\varepsilon_1 - \varepsilon_2, \varepsilon_2 - \varepsilon_3\}$. Recall that the triple $\llbracket \ell, m, n \rrbracket$ corresponds to the $\g'$-weight $(-m,\ell,n)$ in standard $\varepsilon$-coordinates.  Setting $t_i = e^{\varepsilon_i}$ for $i=1,2,3$, we have
\begin{align*}
\b(0) = \frac{1-\frac{t_3}{t_1}}{\left(1-\frac{t_2}{t_1}\right)\left(1-\frac{t_3}{t_2}\right)} &= \frac{1}{1-\frac{t_2}{t_1}} + \frac{\frac{t_3}{t_2}}{1-\frac{t_3}{t_2}}\\
&= \sum_{k=0}^\infty e^{k(\varepsilon_2 - \varepsilon_1)} + \sum_{k=1}^\infty e^{k(\varepsilon_3 - \varepsilon_2)}\\
&=\sum_{k=0}^\infty e^{k(-1,1,0)} + \sum_{k=1}^\infty e^{k(0,-1,1)}\\
&=\sum_{k=0}^\infty e^{k\llbracket 1,1,0\rrbracket} + \sum_{k=1}^\infty e^{k\llbracket -1,0,1\rrbracket},
\end{align*}
coinciding exactly with \eqref{supportinexample} and Theorem \ref{thm:mainresult}.  (See Figure \ref{fig:b0example} for a visualization of $\b(0)$, in which we plot the support of $B(0,-)$ on the weight lattice of $\sl_3$.)

\begin{figure}[ht]
    \centering
    \includegraphics[width=0.4\linewidth, frame]{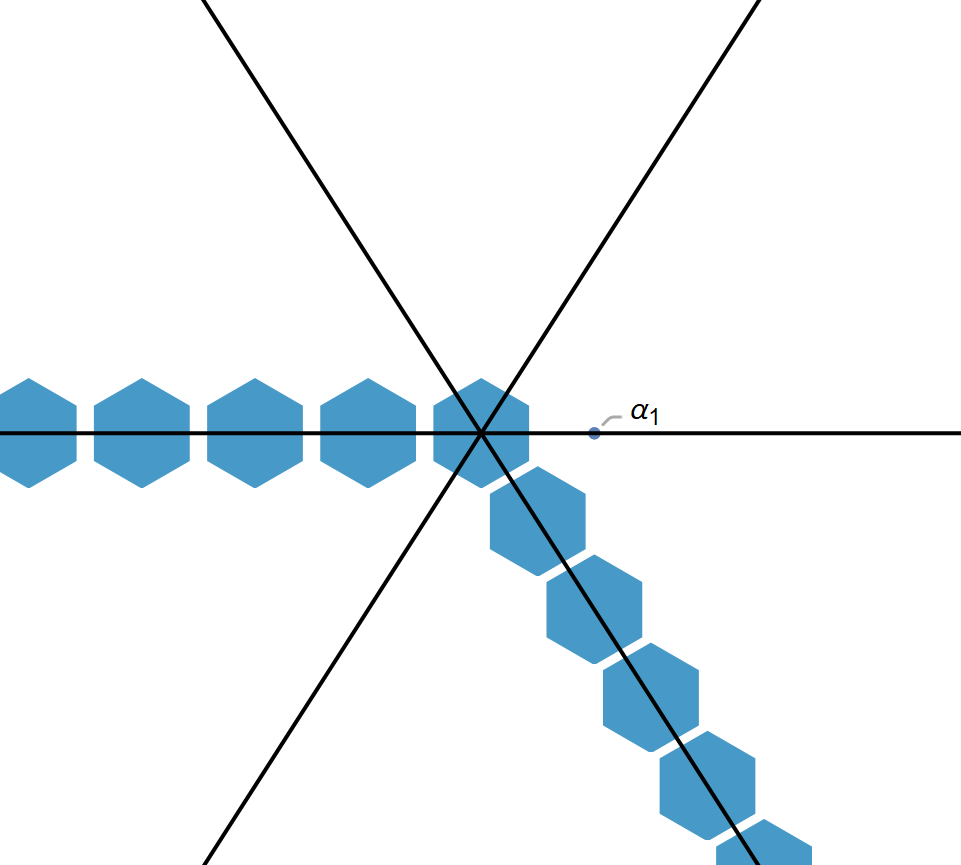}
    \caption{A visualization of $\b(0)$ from Example \ref{ex:b0}.  By programming Blattner's formula directly in Mathematica, we plot a hexagon at each weight $\eta \in P(\sl_3)$ such that $B(0,\eta)=1$.  Note that these weights are the nonnegative multiples of $-\al_1=(-1,1,0)$ and of $-\al_2=(0,-1,1)$.  (All other weights return 0.)}
    \label{fig:b0example}
\end{figure}
\end{exam}

Before presenting a more interesting example, we outline how our results will allow us to write down explicitly the Enright resolution for $\widetilde F^\la_{p,q}$. Separating the first sum in \eqref{b0inproof}, we can write
\begin{equation}
    \label{b0separate}
\b(0)=\sum_\la e^{\llbracket \la,0,0\rrbracket} \underbrace{\left(\sum_{\mu,\nu} \varepsilon^\la_{\mu,\nu} \cdot e^{\llbracket 0,\mu,\nu\rrbracket} \right)}_{:=\text{ ``coefficient'' of $e^\la$}} + \sum_{\xi \not \in P_+(\m')} c_\xi e^\xi.
\end{equation}
Therefore we begin by computing $\b(0)$, expanding to a sufficiently high order, and then ignoring all terms corresponding to nondominant $\m'$-weights.  Then given $\la$, we should collect all the remaining terms in $\b(0)$ of the form $e^{\llbracket \la,*,*\rrbracket}$ (where the stars are arbitrary), and then factor out $e^{\llbracket \la,0,0\rrbracket}$ to obtain the multi-term ``coefficient'' of $e^\la$ indicated in \eqref{b0separate}.  The terms inside this coefficient tell us exactly which generalized Verma modules $M_{\mu,\nu}$ appear in the Enright resolution of $\widetilde F^\la_{p,q}$, along with appropriate signs depending on the parity of the term.

\begin{rem*}
In order to recover the complete data of the Enright resolution, we clearly need to supplement the method outlined above so as to determine the exact term (rather than just the parity) in which each generalized Verma module occurs.  Although not \textit{a priori} obvious, it can be seen from the construction in \cite{ew} that as we move from right to left in the resolution, the partitions $\mu$ and $\nu$ strictly increase in size.  This fact will allow us to easily recover the ordering of the terms once we have found the coefficient of $e^\la$.
\end{rem*}

\begin{exam}
\label{ex:n1p3q3}
We use software to present an example of the method outlined above.  On one hand, we will compute the terms of the Enright resolution of $\widetilde F^\la_{p,q}$ directly, using Maple code written by Jeb Willenbring.  On the other hand, we will expand $\b(0)$ and isolate the coefficient of $e^\la$, using Mathematica code written by the author of the present paper.

Let $n=1$, with $p=q=3$.  Set $\la = 0$, the empty partition; then $\widetilde F^0_{3,3} = \C[V]^{\GL_1}$ is the invariant subalgebra of $\C[V]$ in the setting of Section \ref{s:classicalinvariant}.  This example is of special interest because, by the second fundamental theorem of classical invariant theory, $\C[V]^{\GL_1}$ is isomorphic to the coordinate ring of the determinantal variety consisting of matrices in $\M_3(\C)$ with rank at most 1, known as the first Wallach representation of $\su(3,3)$.  See Enright and Hunziker's paper \cite{eh} on minimal resolutions for the Wallach representations; in fact, our present example is exactly Example 7.11 in \cite{ehp} where $p=3$.

In Maple, we compute the following resolution for $\widetilde F^0_{3,3}$, with $M_{\mu,\nu}$ as in \eqref{gvmform}:
\begin{align*}
0 \rightarrow M_{(2,2,2),(2,2,2)}\rightarrow M_{(2,1,1),(2,1,1)} &\rightarrow M_{(1,1,1),(2,1,0)} \oplus M_{(2,1,0),(1,1,1)} \\
&\rightarrow M_{(1,1,0),(1,1,0)}\rightarrow M_{(0,0,0),(0,0,0)}\rightarrow \widetilde F^0_{3,3} \rightarrow 0.
\end{align*}  
In Mathematica, we define the generating function $\b(0)$ directly from the definition \eqref{b0}, setting $x_i=e^{\varepsilon_i}$ for $i=1,2,3$, and $w=e^{\varepsilon_4}$, and $y_i = e^{\varepsilon_{i+4}}$ for $i=1,2,3$.  Note that the alphabetical order $w,x,y$ mirrors that of $n,p,q$ as we visualize the embedding $\m' \subset \g'$; in this way, the exponent vector for the variables $x_i$ encodes (the dual of) a weight of $\gl_p$, the exponent vector for the $y_i$ encodes a weight of $\gl_q$, and the exponent of $w$ encodes a weight of $\gl_n$.  Using \eqref{b0}, we obtain
$$
\b(0)=\frac{\displaystyle\prod_{1\leq i < j \leq 3}\left(1-\frac{x_j}{x_i}\right) \prod_{1\leq i < j \leq 3}\left(1-\frac{y_j}{y_i}\right) \prod_{1\leq i,j\leq 3}\left(1-\frac{y_j}{x_i}\right)}{\displaystyle\prod_{1\leq i \leq 3}\left(1-\frac{w}{x_i}\right) \prod_{1\leq i\leq 3}\left(1-\frac{y_i}{w}\right)}.
$$
Upon expanding $\b(0)$ to a sufficiently high order, we program Mathematica to retain only those terms in which the exponent vectors for $(x_1,x_2,x_3)$ and $(y_1,y_2,y_3)$ are both weakly decreasing, corresponding to dominant weights for $\gl_3$.  (Since $n=1$ in this example, there is no need to do the same for the lone variable $w$.)  In the remaining sum, we then find the ``coefficient'' of $e^\la$, namely, of $w^0$ --- that is, we collect all terms in which the power of $w$ is 0.  This ``coefficient'' is the following sum of terms in the variables $x_i$ and $y_i$; in light of the remark before this example, we write the terms in descending order with respect to total degree in the $y_i$:
\begin{align*}
    &\phantom{==} \frac{y_1^2 y_2^2 y_3^2}{x_1^2 x_2^2 x_3^2} - \frac{y_1^2 y_2 y_3}{x_1 x_2 x_3^2} + \frac{y_1^2 y_2}{x_1 x_2 x_3} + \frac{y_1 y_2 y_3}{x_2 x_3^2} - \frac{y_1 y_2}{x_2 x_3} + 1\\[3ex]
    &=\phantom{-}e^{(-2,-2,-2,0,2,2,2)} - e^{(-1,-1,-2,0,2,1,1)} + e^{(-1,-1,-1,0,2,1,0)}+e^{(0,-1,-2,0,1,1,1)}\\
    &\phantom{=}- e^{(0,-1,-1,0,1,1,0)}+e^{(0,0,0,0,0,0,0)}\\[3ex]
    &= \phantom{-}e^{\llbracket 0,(2,2,2),(2,2,2) \rrbracket} - e^{\llbracket 0,(2,1,1),(2,1,1) \rrbracket} + e^{\llbracket 0,(1,1,1),(2,1,0) \rrbracket} + e^{\llbracket 0,(2,1,0),(1,1,1) \rrbracket}\\
    &\phantom{=} - e^{\llbracket 0,(1,1,0),(1,1,0) \rrbracket} + e^{\llbracket 0,(0,0,0),(0,0,0) \rrbracket}.
\end{align*}
We thus arrive at the Enright resolution produced by Maple.
\end{exam}

As a final application of our main result, we can write down a new combinatorial expression, without any signs, for Blattner's formula in Type A, this time for \textit{two} noncompact roots (under a stability condition).

\begin{thm}
\label{thm:LRC2nc}
Suppose $n \geq p+q$.  Let $\g' = \sl_{n+p+q}$ with $\Pi'_{\nc}=\{\al_p,\al_{p+n}\}$; hence $\k' = \s(\gl_n \oplus \gl_{p+q})$, just as in Section \ref{s:2ncroots}. Let $\delta,\eta \in P_+(\k')$, where $\delta=\llbracket\delta^n,\delta^{p },\delta^q\rrbracket$ and $\eta = \llbracket \eta^n, \eta^{p}, \eta^q\rrbracket$, and write $\llbracket \delta^p,\delta^q\rrbracket:=(-\delta^p_p,\ldots,-\delta^p_1,\delta^q_1,\ldots,\delta^q_q) \in P_+(\gl_{p+q})$. Then
$$
B(\delta,\eta) = \sum_{\la,\mu,\nu} \LRC{\llbracket \delta^p,\delta^q\rrbracket}{\mu,\nu} \LRC{\eta^n}{\delta^n,\la} \LRC{\eta^p}{\mu,\la^+} \LRC{\eta^q}{\nu,\la^-}
$$
where the sum is over all $\la,\mu,\nu$ satisfying \eqref{howeconditions}.
\end{thm}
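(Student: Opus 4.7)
The plan is to combine the generating function identity $\b(\delta) = \ch L_{\k'}(\delta) \cdot \b(0)$ from Section \ref{s:Blattner} with the Howe-side expansion of $\b(0)$ provided by Theorem \ref{thm:mainresult}. First, I would observe that, since we are in the stable range $n \geq p+q$, equation \eqref{epsilonstable} collapses $\varepsilon^{\la}_{\mu,\nu}$ to a Kronecker delta. Substituting this into equation \eqref{b0final} from the proof of Theorem \ref{thm:mainresult} yields
$$
\b(0) = \Delta_{\m'} \cdot \sum_{\la} \ch\bigl(F^\la_n \otimes F^{\la^+}_p \otimes F^{\la^-}_q\bigr),
$$
where $\la$ ranges over weakly decreasing $n$-tuples with $\ell(\la^+)\leq p$ and $\ell(\la^-)\leq q$. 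Multiplying both sides by $\ch L_{\k'}(\delta) = \ch F^{\delta^n}_n \cdot \ch F^{\llbracket \delta^p,\delta^q\rrbracket}_{p+q}$ and invoking \eqref{weyldenomeffect}, I deduce that $B(\delta,\eta)$, which is the coefficient of $e^\eta$ in $\b(\delta)$, equals the multiplicity of the irreducible $\m'$-module of highest weight $\eta$ inside the virtual $\m'$-module
$$
\bigl(F^{\delta^n}_n \otimes F^{\llbracket \delta^p,\delta^q\rrbracket}_{p+q}\bigr) \otimes \bigoplus_{\la} F^\la_n \otimes F^{\la^+}_p \otimes F^{\la^-}_q,
$$
where $F^{\llbracket \delta^p,\delta^q\rrbracket}_{p+q}$ is restricted from $\gl_{p+q}$ down to $\gl_p \oplus \gl_q$ via its block-diagonal embedding in $\k'$.

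The next step is to decompose this $\m'$-module into irreducibles by applying three classical ingredients: (i) the branching rule that expresses $F^{\llbracket \delta^p,\delta^q\rrbracket}_{p+q}\bigr|_{\gl_p \oplus \gl_q}$ as $\bigoplus_{\mu,\nu} \LRC{\llbracket \delta^p,\delta^q\rrbracket}{\mu,\nu} F^{\mu}_p \otimes F^\nu_q$; (ii) the rational tensor product decomposition $F^{\delta^n}_n \otimes F^\la_n = \bigoplus_{\eta^n} \LRC{\eta^n}{\delta^n,\la} F^{\eta^n}_n$ from Section \ref{s:LRC}; and (iii) the analogous decompositions $F^\mu_p \otimes F^{\la^+}_p = \bigoplus_{\eta^p} \LRC{\eta^p}{\mu,\la^+} F^{\eta^p}_p$ and $F^\nu_q \otimes F^{\la^-}_q = \bigoplus_{\eta^q} \LRC{\eta^q}{\nu,\la^-} F^{\eta^q}_q$. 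Gathering the coefficient of $F^{\eta^n}_n \otimes F^{\eta^p}_p \otimes F^{\eta^q}_q$ then produces precisely the quadruple sum of generalized Littlewood--Richardson coefficients asserted in the theorem.

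The main obstacle is justifying ingredient (i): interpreting the classical branching from $\GL_{p+q}$ to $\GL_p \times \GL_q$ of a rational representation as a \emph{generalized} Littlewood--Richardson coefficient in the sense of Section \ref{s:LRC}. In the purely polynomial setting this is the familiar statement that the $\GL_p\times \GL_q$-branching multiplicity of $F^\gamma_{p+q}$ is $c^\gamma_{\mu,\nu}$, but here $\llbracket \delta^p,\delta^q\rrbracket$ is genuinely rational (one piece is nonpositive). I expect this can be handled by a seesaw argument along the lines of \cite{cew}, or equivalently by unfolding the hollow contingency table in Section \ref{s:LRC} and matching the nonzero blocks against the skew-shape combinatorics governing $\GL_{p+q}$-branching. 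An additional bookkeeping issue is the twist in the embedding of $\gl_p$ (its negative transpose, as discussed in the proof of Lemma \ref{lemma:deltabarcancel}); this is exactly what forces the weight $\llbracket \delta^p,\delta^q\rrbracket$ to appear with $-\delta^p_p,\ldots,-\delta^p_1$ in the first $p$ coordinates and must be tracked consistently through the branching. Once (i) is established, the rest is a routine combinatorial assembly of the four Littlewood--Richardson multiplicities into a single sum indexed by $\la,\mu,\nu$.
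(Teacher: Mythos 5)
Your proposal is correct and follows essentially the same route as the paper's proof: stable-range collapse of $\varepsilon^\la_{\mu,\nu}$ via \eqref{epsilonstable}, substitution into \eqref{b0final}, multiplication by $\ch L_{\k'}(\delta)$, decomposition of $F^{\llbracket\delta^p,\delta^q\rrbracket}_{p+q}$ under $\gl_p\oplus\gl_q$, distribution of the three rational tensor products, and extraction of the coefficient of $e^\eta$ via \eqref{weyldenomeffect}. The ``obstacle'' you flag regarding branching from $\GL_{p+q}$ to $\GL_p\times\GL_q$ being computed by a generalized Littlewood--Richardson coefficient is indeed used silently in the paper's proof (it is part of the machinery imported from \cite{cew} alongside the tensor interpretation), so treating it as established rather than as an unresolved step would bring your argument fully in line with the paper.
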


\begin{proof}
Using \eqref{b0final} to substitute for $\b(0)$, and then \eqref{epsilonstable} to simplify since we are in the stable range, we calculate that
\begin{align*}
    \b(\delta) &= \ch L_{\k'}(\delta) \cdot \b(0)\\
    &= \ch\left(F_n^{\delta^n} \otimes F_{p+q}^{\llbracket\delta^p,\delta^q\rrbracket}\right) \cdot \left(\Delta_{\m'} \cdot \sum_{\la,\al,\be} \varepsilon^\la_{\al,\be} \cdot \ch F_n^\la \otimes F^{\al}_p \otimes F^\be_q\right)\\
    &= \ch\left(F_n^{\delta^n} \otimes \bigoplus_{\mu,\nu} \LRC{\llbracket \delta^p,\delta^q\rrbracket}{\mu,\nu} F^\mu_p \otimes F^\nu_q \right) \cdot \left(\sum_\la \ch F_n^\la \otimes F^{\la^+}_p \otimes F^{\la^-}_q\right)\cdot \Delta_{\m'}\\
    &= \ch \left( \bigoplus_{\mu,\nu} \LRC{\llbracket \delta^p,\delta^q\rrbracket}{\mu,\nu} \left(F_n^{\delta^n} \otimes F^\mu_p \otimes F^\nu_q\right) \otimes \bigoplus_\la F^\la_n \otimes F^{\la^+}_p \otimes F^{\la^-}_q  \right) \cdot \Delta_{\m'} \\
    &=\Delta_{\m'} \cdot \ch\bigoplus_{\substack{\la,\phantom{x}\mu,\phantom{x}\nu,\\\eta^n,\eta^p,\eta^q}}\LRC{\llbracket \delta^p,\delta^q\rrbracket}{\mu,\nu} \LRC{\eta^n}{\delta^n,\la} \LRC{\eta^p }{\mu,\la^+} \LRC{\eta^q}{\nu,\la^-}\left(F^{\eta^n}_n \otimes F^{\eta^p}_p \otimes F^{\eta^q}_q\right)\\
    &= \sum_{\eta:=\llbracket \eta^n,\eta^p,\eta^q\rrbracket}\left(\sum_{\la,\mu,\nu} \LRC{\llbracket \delta^p,\delta^q\rrbracket}{\mu,\nu} \LRC{\eta^n}{\delta^n,\la} \LRC{\eta^p}{\mu,\la^+} \LRC{\eta^q}{\nu,\la^-} \right) e^\eta
    \end{align*}
    plus terms of the form $    \rule{0.5cm}{0.15mm}e^{\text{nondominant $\m'$-weight}}.$  Since $\m' \subset \k'$, nondominant $\m'$-weights are necessarily nondominant $\k'$-weights, and so we ignore all such nondominant terms as falling outside the hypothesis on $\eta$.  Hence by the definition of $\b(\delta)$, we conclude that
    $$
    B(\delta,\eta) = \sum_{\la,\mu,\nu} \LRC{\llbracket \delta^p,\delta^q\rrbracket}{\mu,\nu} \LRC{\eta^n}{\delta^n,\la} \LRC{\eta^p}{\mu,\la^+} \LRC{\eta^q}{\nu,\la^-}.
    $$
\end{proof}



\bibliographystyle{amsplain}
\bibliography{references}

\end{document}